\newcommand{\R}{{\mathbb R}}
\newcommand{\h}{h}
\renewcommand{\Re}{\mathrm Re}
\newtheorem{theorem}{Theorem}
\newtheorem{proposition}[theorem]{Proposition}
\newtheorem{lemma}[theorem]{Lemma}
\theoremstyle{definition}
\newtheorem{remark}{Remark}
\newtheorem{claim}{Claim}
\title[Perturbations of propagated Schr\"{o}dinger eigenfunctions]
{On the distribution of perturbations of propagated Schr\"{o}dinger eigenfunctions}
\address[Y. Canzani, D. Jakobson, J. Toth]{Department of Mathematics and
Statistics, McGill University, 805 Sherbrooke Str. West, Montr\'eal
QC H3A 2K6, Ca\-na\-da.\medskip}
\author[Y. Canzani]{Yaiza Canzani}
\email{canzani@math.mcgill.ca}
\author[D. Jakobson]{Dmitry Jakobson}
\email{jakobson@math.mcgill.ca}
\author[J. Toth]{John Toth}
\email{jtoth@math.mcgill.ca}
\thanks{Y.C. was supported by Schulich Fellowship. D.J. and J.T. were supported by NSERC, FQRNT and Dawson Fellowships.\\
\indent Corresponding author: John Toth}
\begin{document}

\keywords{ Eigenfunctions, Schr\"odinger operators, Loschmidt echo, random wave conjecture, 
conformal and volume-preserving deformations}
\subjclass[2010]{35P20, 58J37, 58J40, 58J50, 58J51, 81Q15, 81Q50}

\maketitle


\begin{abstract}

Let $(M,g_0)$ be a compact Riemmanian manifold of dimension $n$. Let 
$P_0 (\h) := -\h^2\Delta_{g}+V$
be the semiclassical Schr\"{o}dinger operator for $\h \in (0,\h_0]$, and
let $E$ be a regular value of  its principal symbol. 
Write $\varphi_\h$ for an  $L^2$-normalized eigenfunction of $P_0(\h)$ with eigenvalue
 $E(\h) \in [E-o(1),E+ o(1)]$.
 We consider a smooth family of metric perturbations $g_u$ of $g_0$
with $u$ in the
product space $B^k(\varepsilon)=(-\varepsilon, \varepsilon)^k  \subset \mathbb R^k$ satisfying the admissibility condition in Definition \ref{admissible}. 
For $P_{u}(\h) := -\h^2 \Delta_{g_u}  +V$
and small $|t|>0$, we define the propagated perturbed eigenfunctions
$$\varphi_{h,t}^{(u)}:=e^{-\frac{i}{\h}t P_u(\h) } \varphi_\h.$$
They appear in the mathematical description of the  Loschmidt echo effect 
in physics. 

Motivated by random wave conjectures in quantum chaos, 
we study the distribution of the real part of the perturbed eigenfunctions regarded as  
random variables
$\Re (\varphi^{(\cdot)}_{h,t}(x)): B^{k}(\varepsilon) \to \mathbb R$ for $x\in M$.
In particular, under an admissibility condition on the metric when $(M,g)$ is chaotic, we compute the $h \to 0^+$ asymptotics of
the variance $\text{Var} [\Re(\varphi^{(\cdot)}_{h,t}(x))] $ and show that the odd moments  vanish
as $h \to 0^+$ as long as $x$ is not on the generalized caustic set where $V(x)=E.$ 
\end{abstract}\ \\

\section{Introduction}

Let $(M,g_0)$ be a compact Riemmanian manifold of dimension $n$  with Laplace operator
$\Delta_{g_0}=\delta_{g_0}d :C^\infty(M) \to C^\infty (M)$, and let $V \in C^\infty(M)$ denote a smooth potential over $M$.
For  $\h \in(0,  \h_0]$, consider the Schr\"{o}dinger operator
\begin{equation}\label{P_0}
P_0 (\h) := -\h^2\Delta_{g_0}+V,
\end{equation}
and let $E$ be a regular value of  its principal symbol $p_{0}(x,\xi) := |\xi|^2_{g_0(x)} +V(x)$. 
Write $\varphi_\h$ for an  $L^2$-normalized eigenfunction of $P(\h)$ 
 with eigenvalue contained in a shrinking interval centered at $E$;
that is,  $P_0(\h)\varphi_\h =E(\h)\varphi_\h$ and $E(\h) \in [E-o(1),E+ o(1)]$. \\

Consider a smooth family of perturbations $g_u$ of the reference metric $g_0$
with $u$ in the
 product space $B^k(\varepsilon)=(-\varepsilon, \varepsilon)^k \subset \mathbb R^k$.
The number of parameters $k \geq n$ is chosen sufficiently large (but finite) so that the admissibility condition on the perturbation $g_u$ in Definition \ref{admissible} is satisfied.
We introduce the associated perturbed Schr\"{o}dinger operators
	\begin{equation}\label{P_u(h)}
	P_{u}(\h) := -\h^2 \Delta_{g_u}  +V,
	\end{equation}
with principal symbol
	\begin{equation}\label{p_u}
		p_u: T^*M \to T^*M, \quad \quad p_{u}(x,\xi) := |\xi|^2_{g_u}  +V(x).
	\end{equation}
Fix $t \neq 0$ small, independent of $h$, and define
 the perturbed propagated eigenfunctions
\begin{equation}\label{perturbed efxs}
\displaystyle{ \varphi_{h,t}^{(u)}:=e^{-\frac{i}{\h}t P_u(\h) } \varphi_\h} .
\end{equation}
\
The perturbations satisfy $\varphi_{h,t}^{(u)}=\Phi_\h^{(u)}(t)$ where  $\Phi_\h^{(u)}(t)$ 
denotes  the solution at time $t$ of the Schr\"{o}dinger equation
    $$\begin{cases}
	\left(i \h \frac{\partial }{\partial s}- P_u(\h) \right )\Phi_\h^{(u)}(s)=0,\\
	\Phi_\h^{(u)}(0)=\varphi_\h.
    \end{cases}$$

The aim of this paper is to study the  $\h \to 0^+$ asymptotics of the distribution 
of $\varphi_{h,t}^{(u)},$  where the latter are regarded as random variables in
$u \in  B^k(\varepsilon)$.  Specifically, we compute the variance and all odd moments 
in the semiclassical limit $\h\to 0^+$.
To state our results, we need to define an admissibility condition on the metric 
perturbations. Here and throughout the rest of the manuscript we adopt the notation 
$\delta_{u_\alpha}=\partial_{u_\alpha}\big|_{u=0}$. \\

\noindent{\bf Definition 1 (Admissibility condition).} \label{admissible}
		Let $g_u$ with $u \in  B^k(\varepsilon)$ be a $C^{\infty}$ metric perturbation of a 
reference metric $g_0$.
		 We say that $g_u$  is  {\bf admissible} at $x \in M$  if \\	
		 
		\begin{enumerate}[A)]{\leftmargin=1em \itemsep=1em}
		\item \label{cond 1}
               There exists an $n$-tuple of coordinates of 
                $u$, $u'=(u_1, \dots, u_n)$,  for which the Hessian matrices	
		$d_{u'} d_\xi ( p_u(x,\xi))$
		are invertible  for all $u \in B^k(\varepsilon)$ and all $\xi \in T_x^*M$ with
		$(x,\xi) \in p_0^{-1}(E-c\varepsilon, E+c\varepsilon)$
		where the constant $c=c(\varepsilon)>0$ is defined in (\ref{constant}).\\
		
		 \item \label{cond 2}
		 There exists a parameter coordinate $u_\alpha$, a neighborhood $\mathcal W$ of $x$, and 
		 a function $a \in C^{\infty}(\mathcal W, \mathbb R \backslash \{0\})$ 
		   such that
		 $ \delta_{u_{\alpha}} g_u^{^{-1}}(x) =a(x)\, g_0^{^{-1}}(x)$ for  $x \in \mathcal W.$ 
		\end{enumerate}\ \smallskip

\begin{remark}\label{neighborhood U}
If $g_u$ is an admissible at $x$, then there exists a neighborhood $\mathcal U\subset M$ of $x$ on 
which condition \eqref{cond 1} holds.
\end{remark}

We show in Section \ref{section: admissible perturbations} that the admissibility 
condition in Definition 1 is satisfied by a large class of metric perturbations and  
we also give a geometric interpretation of the admissibility condition.
The tangent space at $g_0$ to the space of all Riemannian metrics over $M$ can be 
decomposed into the direct sum of the space of symmetric $2$-tensors with the fixed 
volume form $dvol_{g_0}$, and the space of symmetric $2$-tensors obtained by pointwise 
multiplication of $g_0$.
   We show that the notion of being admissible is intrinsically related to having 
$n = \dim M$ volume preserving directions in which the metric $g_0$ is perturbed 
(this is condition \eqref{cond 1}) and to having one direction in which the metric 
can be conformally perturbed (this is condition \eqref{cond 2}). 

As a model example, 
suppose one wishes to perturb the flat metric $g_0$  on the 2-torus $\mathbb T^2$. 
Let $x_0 \in \mathbb T^2$ and in a neighborhood $\mathcal W$ of $x_0$ consider any 
perturbation $g_u$ with $u=(u_1,u_2,u_3,u'')\in B^k(\varepsilon)$, $k \geq 3$, of the form
$$g_u^{-1}(x)=g_0^{-1}(x)+h_{(u_1,u_2,u_3)}(x) +h_{u''}(x),\qquad x \in \mathcal W,$$ 
where
$$ 
h_{(u_1,u_2,u_3)}(x)=
    u_1 \begin{pmatrix} a_1(x) & b_1(x) \\ b_1(x) & -a_1(x) \end{pmatrix}
   + u_2 \begin{pmatrix} a_2(x) & b_2(x) \\ b_2(x) & -a_2(x) \end{pmatrix}
   + u_3 \begin{pmatrix} a_3(x) & 0 \\ 0 & a_3(x) \end{pmatrix}
$$
and $h_{u''}$ is any symmetric $2$-tensor 
depending on the left-out variables $u'' \in B^{k-3}(\varepsilon)$ and higher 
powers of $u_1,u_2$ and $u_3$.
The perturbation $g_u$ is admissible at $x_0$ provided  $\varepsilon$ is small, 
$a_2, a_2, a_3 \in C^\infty(\mathcal W)$,   $a_3 \neq 0$ in $\mathcal W$, and the vector fields
$(a_1(x_0), b_1(x_0))$ and  $(a_2(x_0),b_2(x_0))$ are linearly independent.  
We remark that this admissibility condition is satisfied on open subsets in the space of all $C^{\infty}$
metric perturbations on $\mathbb T^2$.

We next describe the sense in which the eigenfunctions $\varphi_{h,t}^{(u)}$ are regarded as random variables in the deformation parameters $u \in {B}^{k}(\varepsilon)$.  Consider a cut-off function $\chi \in C^{\infty}_0({B}^{k}(\varepsilon);[0,1])$
 with  $\chi(u) = 1$ for $ u \in {B}^{k}(\varepsilon/2).$
We introduce the normalization constant
\begin{equation}\label{normalizing factor}
c_k(\varepsilon):= \left(\int_{{B}^{k}(\varepsilon)} \chi^2(u) du \right)^{-1},
\end{equation}
and define the probability measure $\nu$ on  ${B}^{k}(\varepsilon)$ by
$$ d\nu(u):= c_k(\varepsilon) \, \chi^2(u) \, du.$$
The introduction of the cut-off function in the definition of the probability measure is to ensure that all the integrands we consider, regarded as functions of $u$, are compactly supported in the interior of the ball, ${B}^{k}(\varepsilon)$.  This is crucial for the $\h$-microlocal characterization of the variance in Proposition \ref{A_x}.

 We view the real part of the perturbed
 eigenfunctions $\varphi_{h,t}^{(u)}$ defined in \eqref{perturbed efxs} as random variables
 $$\Re \left(\varphi^{(\cdot)}_{h,t}(x)\right):B^{k}(\varepsilon) \to \mathbb R$$
 depending on the  spatial parameters  $x\in M$.
 Since one can study the distribution of a random variable such as $\Re (\varphi^{(\cdot)}_{h,t}(x))$
 by understanding its moments,
 we dedicate this paper to study the asymptotics of the variance $Var [\Re (\varphi^{(\cdot)}_{h,t}(x))]$  and of the odd moments
 $\mathbb E [\Re(\varphi_{h,t}^{(\cdot)}(x))]^p$
 in the semiclassical limit $\hbar \to 0^+$. 
  \begin{remark}
Throughout the paper, we write that a condition holds \emph{locally uniformly} in a set $U$ whenever it holds
  uniformly on compact subsets of $U$. 
   \end{remark} 
Our first result holds for general Riemannian manifolds $(M,g_0)$.

\newpage
   \begin{theorem}\label{odd moments and variance}
	Let $(M,g_0)$ be a compact Riemannian manifold of dimension $n$ and
	let $E$ be a regular value of $p_0$. Suppose $g_u$ is
	 a perturbation of $g_0$ with $u \in B^k(\varepsilon)\subset \R^k$ that
is admissible at every $x \in M$. Fix a positive integer $\tilde p \in {\mathbb Z}^+.$ 
	Then, for $\varepsilon>0$ and $|t| >0$ sufficiently small, depending on $(M,g_0)$ and $\tilde p$, there is $h_0(t,\varepsilon) >0$ such that for $\h \in (0, \h_0(t,\varepsilon)]$ and $x \notin V^{-1}(E)$,\medskip
	 \begin{enumerate}
	\item \label{first} There exists a  constant $C>0$, independent of $h$,
	  with
	$$ Var \left[\Re \left(\varphi^{(\cdot)}_{h,t}(x)\right) \right] \leq C.$$
	\item For $p \in {\mathbb Z}^+$ odd with $p\leq \tilde p$, 
	$$\mathbb E \left[\Re(\varphi_{h,t}^{(\cdot)}(x))\right]^p= \mathcal O(\h^\infty).  $$
	\end{enumerate}	
These estimates are locally uniform for $x \notin V^{-1}(E).$ 
\end{theorem}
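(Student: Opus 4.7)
The plan is to represent each factor $\varphi_{h,t}^{(u)}(x)$ via the semiclassical FIO form of the Schr\"odinger propagator $e^{-\frac{i}{h}tP_u(h)}$, valid for $|t|$ small enough that no caustics develop, so that its Schwartz kernel can be written as
$$e^{-\frac{i}{h}tP_u(h)}(x,y) \;=\; (2\pi h)^{-n}\int e^{i\phi_t^{(u)}(x,y,\xi)/h}\, a_t^{(u)}(x,y,\xi;h)\, d\xi \;+\; O_{L^2}(h^\infty),$$
where $\phi_t^{(u)}$ solves the Hamilton--Jacobi equation $\partial_t\phi_t^{(u)} + p_u(x,\partial_x\phi_t^{(u)})=0$ with $\phi_0^{(u)}(x,y,\xi)=\langle x-y,\xi\rangle$, and $a_t^{(u)}$ is a classical symbol. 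Expanding $\Re(z)^p = 2^{-p}\sum_j\binom{p}{j}z^{j}\bar z^{p-j}$ reduces the theorem to the analysis of
$$I_{j,p-j}(x,h) \;=\; \int_{B^k(\varepsilon)} \bigl(\varphi_{h,t}^{(u)}(x)\bigr)^{j} \bigl(\overline{\varphi_{h,t}^{(u)}(x)}\bigr)^{p-j}\, d\nu(u),$$
which, after substituting the FIO kernel, becomes an oscillatory integral in $(\vec y,\vec\xi,\vec y',\vec\xi';u)$ with combined phase
$$\Phi \;=\; \sum_{r=1}^{j}\phi_t^{(u)}(x,y_r,\xi_r) \;-\; \sum_{s=1}^{p-j}\phi_t^{(u)}(x,y'_s,\xi'_s).$$

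For odd moments I would carry out non-stationary phase in the conformal parameter direction $u_\alpha$ supplied by condition (B). Differentiating the Hamilton--Jacobi equation and using $\delta_{u_\alpha}p_u = a(x)|\xi|^2_{g_0}$ gives
$$\delta_{u_\alpha}\phi_t^{(u)}(x,y,\xi)\big|_{u=0} \;=\; -\int_0^t a\bigl(\pi\gamma_s(y,\xi)\bigr)\,|\xi_s|^2_{g_0}\, ds,$$
where $\gamma_s$ denotes the $p_0$-bicharacteristic issuing from $(y,\xi)$. After microlocalizing via the semiclassical wavefront set $\mathrm{WF}_h(\varphi_h)\subset p_0^{-1}(E)$ one has $|\xi_s|^2_{g_0}=E-V(\pi\gamma_s)$ along the relevant bicharacteristics, and for small $|t|$ these stay near $x$, so
$$\partial_{u_\alpha}\Phi \;=\; -(2j-p)\,t\,a(x)\bigl(E-V(x)\bigr) + O(t^2)$$
on the microsupport of the amplitude. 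This is nonzero whenever $p$ is odd (so $2j-p\neq 0$), $a(x)\neq 0$ by condition (B), $V(x)\neq E$, and $|t|$ is small. Since the cut-off $\chi^2$ in $d\nu$ keeps the integrand compactly supported inside $B^k(\varepsilon)$, iterated integration by parts in $u_\alpha$ produces arbitrary $h$-gains, so $I_{j,p-j}(x,h)=O(h^\infty)$ locally uniformly for $x\notin V^{-1}(E)$, proving part~(2).

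For the variance, the identity $\Re(z)^2=\tfrac14(z^2+\bar z^2+2|z|^2)$ combined with the same non-stationary phase argument (now with imbalance $|2j-p|=2$ for the $z^2,\bar z^2$ contributions and $|2j-p|=1$ for the centering term $(\mathbb E[\Re z])^2$) reduces the problem to bounding the diagonal term $\mathbb E[|\varphi_{h,t}^{(\cdot)}(x)|^2]$. For this term $\Phi$ is stationary in $u$ on the diagonal $\{(y,\xi)=(y',\eta)\}$, and I would apply stationary phase in $(y',\eta,u')$, using condition (A) to guarantee non-degeneracy of the mixed Hessian $d_{u'}d_\xi p_u$ (and hence, via Hamilton--Jacobi differentiation, of the $(u',\xi')$-block of the Hessian of $\Phi$), thereby reducing $\mathbb E[|\varphi_{h,t}^{(\cdot)}(x)|^2]$ to an expression of the form $\langle \mathrm{Op}_h(b_{t,x})\varphi_h,\varphi_h\rangle$ for a compactly supported symbol $b_{t,x}$. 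Uniform $L^2$-boundedness of such pseudodifferential operators (Calder\'on--Vaillancourt) together with $\|\varphi_h\|_{L^2}=1$ then yields the constant $C$ of part~(1); this is precisely the content of the $h$-microlocal characterization recorded in Proposition~\ref{A_x}.

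The main obstacle is the careful management of the combined stationary/non-stationary phase analysis. Concretely one must (i)~microlocalize to $p_0^{-1}(E)$ before invoking condition (B), which requires an $h$-pseudodifferential cut-off applied to $\varphi_h$ with controlled error; (ii)~show that the non-stationarity in $u_\alpha$ produced by the leading Hamilton--Jacobi term persists uniformly after the stationary phase reduction in the remaining $2p-1$ momentum/position variables and in the other parameter directions; and (iii)~control the $O(t^2)$ and symbolic remainders, which forces the choice of $|t|$ small depending on $(M,g_0)$ and $\tilde p$. The assumption that $g_u$ is admissible at every $x\in M$ supplies the direction $u_\alpha$ locally uniformly in $x$, underpinning the locally uniform estimates for $x\notin V^{-1}(E)$.
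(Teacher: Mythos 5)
Your proposal is correct and takes essentially the same approach as the paper: expand $(\Re z)^p$ via the binomial theorem, kill every imbalanced monomial $\varphi^j\bar\varphi^{p-j}$ ($j\neq p-j$) by non-stationary phase in the conformal direction $u_\alpha$ using condition~(B), the support condition $WF_h\subset p_0^{-1}(E)$ and the hypothesis $V(x)\neq E$, and handle the single diagonal term $\int|\varphi^{(u)}_{h,t}(x)|^2\,d\nu(u)$ by exploiting condition~(A) to recast it as $\langle A_{t,x,u''}(h)\varphi_h,\varphi_h\rangle$ with $A_{t,x,u''}\in\Psi^{0,-\infty}_{cl}(M)$, then invoke $L^2$-boundedness. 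The only cosmetic difference is that the paper does not phrase the diagonal step as a stationary-phase reduction in $(y',\eta,u')$ but rather promotes $u'\mapsto \chi(u)W_{t,u}(h)(x,\cdot)$ to a semiclassical FIO $\hat W_{t,x,u''}$ with canonical-graph Lagrangian (the IFT step $u'=u'(y,\eta)$ uses precisely the non-degeneracy of $d_{u'}d_\xi p_u$), and sets $A_{t,x,u''}=\hat W^*_{t,x,u''}\hat W_{t,x,u''}$ — which is the operator-theoretic packaging of the same stationary-phase computation you describe.
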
\ \smallskip
\begin{remark} \label{upper}
We note that if $g_u$ is admissible for all  $x \in M,$ we prove that as $h \to 0^+,$
$$ \int_{B^k(\varepsilon)} |\varphi_{h,t}^{(u)}(x)|^2 d\nu(u) = {\mathcal O}(1),$$
provided $\varepsilon  >0$ and $|t| >0$ are sufficiently small. Moreover, this estimate is uniform in $x \in M.$  However, the statement of the corresponding result for the variance in Theorem \ref{odd moments and variance} (\ref{first}) requires the asymptotic vanishing of the mean (the first moment) for which we require the condition that $x \notin V^{-1}(E)$ (see (\ref{eq:variance})).
\end{remark}
The assumption $x \notin V^{-1}(E)$ in Theorem \ref{odd moments and variance} is used in the integration by parts argument in (\ref{ibp}) to estimate the odd moments. At present, we do not know whether the  estimates for odd moments away from these generalized turning points extend uniformly to all $x \in M.$
This assumption is vacuous in the homogeneous case $V=0$. \\

If the metric perturbation $g_u$ is admissible, there exist $c>0$ and an  $n$-tuple of  $u$-coordinates   denoted by
 $u'=(u_1, \dots, u_n) \in B^{n}(\varepsilon)$
for which $|d_{u'}d_{\xi} p_{(u',u'')}(x ,\eta)| \neq 0$ at $u=0$  provided
$(x,\eta) \in p_0^{-1}(E-c\varepsilon, E+c\varepsilon)$. Using this, we show via an Implicit Function Theorem argument that  
one can locally parametrize $u'$ as a smooth function of $(y,  \eta)\in p_0^{-1}(E)$,  $u'=u'( y, \eta)$  (see \eqref{u'}).
We write $u''\in B^{k-n}(\varepsilon)$ for the omitted parameters  and the dependence of $u'(y,\eta)$ on $(u'',x)$ as parameters is understood.  Furthermore, without loss of generality, we assume that the coordinates of $u$ are ordered so that $u=(u',u'')$.\\


Let  $H_{p_u}$ be the Hamiltonian vector field of $p_u \in C^{\infty}(T^*M)$ and denote by
$G_{u}^{s}:S_{g_u}^*M \to S_{g_u}^*M$  the bicharacteristic flow associated to $H_{p_u}$ at time $s$.
In the case where the manifold $(M,g_0)$ has an ergodic geodesic flow $G_0^s:S^*M \rightarrow S^*M$, we get asymptotic results for the variance provided we consider quantum ergodic sequences of eigenfunctions (for a precise definition see \eqref{QE}).
We continue to write $\chi$ for the cut-off function in the definition of the probability measure and
$c_k(\varepsilon)$ for the corresponding normalizing factor \eqref{normalizing factor}.\\

\begin{theorem}\label{moments function}
	Let $(M,g_0)$ be a compact Riemannian manifold of dimension $n$ and
	let $E$ be a regular value of $p_0$. Assume the geodesic flow on $p_0^{-1}(E)$ is ergodic
	and that $\{\varphi_\h\}_{\h \in (0,\h_0]}$ is a quantum ergodic sequence of $L^2$-normalized
	eigenfunctions of $P_0(\h)$. Suppose $g_u$ with $u\in B^{k}(\varepsilon)$ is
	a perturbation of $g_0$ that is admissible at $x \notin V^{-1}(E)$. Fix  a positive integer $\tilde p\in \mathbb Z^+.$ 
	Then, for $|t|>0$ and $\varepsilon >0$ sufficiently small, depending on $(M,g_0)$ and $\tilde p,$ \\
	
	 \begin{enumerate}
	\item
	$ \lim _{\h \to 0^+} Var  \left[\Re(\varphi_{h,t}^{(\cdot)}(x))\right]=
		 \displaystyle{
		\int_{ B^{k-n}(\varepsilon)}\beta^{k}_{t,x}(u'')\, du''}$,\\
		
	\noindent where $\beta^{k}_{t,x}:B^{k-n}(\varepsilon) \to \R$ is  defined by
	 	\begin{equation}\label{beta} \displaystyle
	  	\beta^{k}_{t,x}(u''):=
	 	\frac{c_k(\varepsilon) (1+ {\mathcal O}(t)) }{|t|^n |p_0^{-1}(E)|}
	 	\underset{p_0^{-1}(E)}{\int}
	 	\frac{ |\det(d_x \pi G_{(u'(y,\eta),u'')}^{-t}( x, \eta))|}{|\det(d_{u'} d_{\xi}\; p_{(u'(y,\eta),u'')}(x ,\eta))|}
	 	\, \chi^2(u'(y,\eta),u'') \;d\omega_E(y,\eta)
	 	\end{equation}
	and $u'=u'( y,  \eta)$ is defined in \eqref{u'}.\\
	 \item For $p \in {\mathbb Z}^+$ odd with $p\leq \tilde p$,
	 $$\lim _{\h \to 0^+} \;\mathbb E \left[\Re(\varphi_{h,t}^{(\cdot)}(x))\right]^p=0.$$
		 \end{enumerate} 
\end{theorem}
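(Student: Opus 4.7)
The plan is to reduce both claims to tools already in hand. Claim (2) is immediate from claim (2) of Theorem \ref{odd moments and variance}, which gives $\mathbb E[\Re(\varphi_{h,t}^{(\cdot)}(x))]^p = \mathcal O(\h^\infty)$ for every odd $p \leq \tilde p$. For claim (1), start from
$$ \text{Var}\left[\Re(\varphi_{h,t}^{(\cdot)}(x))\right] = \mathbb E\left[\Re(\varphi_{h,t}^{(\cdot)}(x))^2\right] - \left(\mathbb E\left[\Re(\varphi_{h,t}^{(\cdot)}(x))\right]\right)^2, $$
where the subtracted square is $\mathcal O(\h^\infty)$ by the $p=1$ case. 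Expanding $\Re(\psi)^2 = \tfrac{1}{2}|\psi|^2 + \tfrac{1}{2}\Re(\psi^2)$ with $\psi = \varphi_{h,t}^{(u)}(x)$, the contribution of $\mathbb E[\Re(\psi^2)]$ is also $\mathcal O(\h^\infty)$: after inserting a WKB parametrix for $e^{-itP_u(\h)/\h}$, the $u$-integral for $\mathbb E[\psi^2]$ is oscillatory with $u'$-phase derivative proportional to $\tfrac{2t}{\h}\nabla_{u'} p_u$, nonvanishing for $x\notin V^{-1}(E)$ and $t\neq 0$, so the same non-stationary-phase/integration-by-parts argument as in (\ref{ibp}) applies. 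The problem is thereby reduced to computing $\tfrac{1}{2}\lim_{\h \to 0^+} \int_{B^{k}(\varepsilon)} |\varphi_{h,t}^{(u)}(x)|^2\, d\nu(u)$.

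For the remaining integral I would invoke the $\h$-microlocal representation from Proposition \ref{A_x} to write
$$ \int_{B^{k}(\varepsilon)} |\varphi_{h,t}^{(u)}(x)|^2\, d\nu(u) = \langle A_{t,x}(\h)\varphi_\h,\varphi_\h\rangle + \mathcal O(\h^\infty), $$
where $A_{t,x}(\h)$ is a zeroth-order, compactly microlocalized semiclassical pseudodifferential operator on $M$ whose principal symbol $a_{t,x}$ is to be identified by stationary phase. Inserting WKB parametrices on both sides of $|\varphi_{h,t}^{(u)}(x)|^2$ expresses the $u$-average as an oscillatory integral in the fiber variables $\xi,\xi'$ and in $u$; admissibility \eqref{cond 1} makes $d_{u'}d_\xi p_u$ invertible, so stationary phase in the $n$ parameters $u'$ is legitimate. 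This produces $|\det d_{u'}d_{\xi}\, p_{(u'(y,\eta),u'')}(x,\eta)|^{-1}$ together with $|t|^{-n}$ from the $\h/t$ rescaling; the critical equations give $u' = u'(y,\eta)$ as in \eqref{u'}, and the Jacobian of the induced change of variable $(\xi,u') \mapsto (y,\eta)\in p_0^{-1}(E)$ generates $|\det d_x\pi\, G^{-t}_{(u'(y,\eta),u'')}(x,\eta)|$. The $u''$-parameters do not participate in the stationary phase and survive as the outer integration, with $\chi^2$ evaluated at the critical point.

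Finally, the quantum ergodicity hypothesis yields
$$ \lim_{\h \to 0^+} \langle A_{t,x}(\h)\varphi_\h,\varphi_\h\rangle = \frac{1}{|p_0^{-1}(E)|}\int_{p_0^{-1}(E)} a_{t,x}(y,\eta)\, d\omega_E(y,\eta), $$
and substituting the explicit formula for $a_{t,x}$ above reproduces $\int_{B^{k-n}(\varepsilon)}\beta^{k}_{t,x}(u'')\,du''$ as in \eqref{beta}. The $(1+\mathcal O(t))$ prefactor absorbs the subprincipal amplitudes of the WKB parametrix, controlled uniformly for $|t|$ small since the perturbed Hamiltonian flow remains $C^\infty$-close to the unperturbed one and the projection $\pi \circ G^{-t}_u$ stays a local diffeomorphism.

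The hard part is the stationary-phase calculation of $a_{t,x}$: one must identify the critical set of the $u'$-phase as the graph of a smooth function $u'(y,\eta)$ uniformly for $(x,\eta)$ in a shell around $p_0^{-1}(E)$ (an implicit function argument using admissibility \eqref{cond 1} and smallness of $\varepsilon$), verify that the operator produced lies in a standard semiclassical class so that the quantum ergodicity limit applies to it, and track the geometric content of the critical equations as backward Hamiltonian transport so that the Jacobian $|\det d_x\pi\, G^{-t}_u|$ emerges correctly. Admissibility \eqref{cond 2} enters implicitly to ensure that $a_{t,x}$ is not identically zero on the shell, which is what makes the variance limit nontrivial rather than vacuous.
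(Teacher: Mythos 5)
Your proof follows essentially the same route as the paper's: part (2) is quoted directly from Theorem \ref{odd moments and variance}, and for part (1) you pass through $\int |\varphi_{h,t}^{(u)}(x)|^2 d\nu(u)$, express it as a matrix element of a zeroth-order semiclassical pseudodifferential operator via Proposition \ref{A_x}, identify the principal symbol by stationary phase in $u'$, and let quantum ergodicity take the $\h \to 0^+$ limit. The main tools --- Lemma \ref{odd moments}, Proposition \ref{A_x}, the canonical graph structure coming from admissibility \eqref{cond 1}, and the quantum ergodicity hypothesis --- are all the same ones the paper uses.

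Two remarks worth flagging. First, your reduction via $(\Re\psi)^2 = \tfrac{1}{2}|\psi|^2 + \tfrac{1}{2}\Re(\psi^2)$, with $\mathbb E[\Re(\psi^2)] = \mathcal O(\h^\infty)$ coming from Lemma \ref{odd moments} (case $\ell=2,\,q=0$), yields $\lim Var[\Re(\varphi_{h,t}^{(\cdot)}(x))] = \tfrac{1}{2}\lim \int |\varphi_{h,t}^{(u)}(x)|^2 d\nu(u)$, with the factor $\tfrac{1}{2}$. The paper's equation \eqref{eq:variance} asserts the same without the $\tfrac{1}{2}$, which would require $\mathbb E[(\Re\psi)^2] = \mathbb E|\psi|^2 + \mathcal O(\h^\infty)$ rather than $\tfrac{1}{2}\mathbb E|\psi|^2 + \mathcal O(\h^\infty)$. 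Your version of the reduction is the correct one; but then your closing sentence that this ``reproduces $\int \beta^k_{t,x}$'' is not consistent with the $\tfrac{1}{2}$ you just produced --- if the paper's symbol computation giving $\lim \int |\psi|^2 d\nu = \int \beta$ is taken at face value, your answer is $\tfrac{1}{2}\int\beta$, and the discrepancy should be stated rather than silently absorbed. Second, a minor imprecision: for the non-stationary phase estimate on $\mathbb E[\psi^2]$ you refer to the $u'$-phase derivative $\nabla_{u'}p_u$, but the integration-by-parts in \eqref{ibp} uses the single conformal coordinate $u_\alpha$ of admissibility condition \eqref{cond 2}, where $\delta_{u_\alpha}p_u = a(x)|\xi|^2_{g_0}$ is sign-definite on the energy shell. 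The $u'$-coordinates of condition \eqref{cond 1} are volume-preserving, so $d_{u'}p_u(x,\xi)$ is a traceless quadratic form in $\xi$ which changes sign as $\xi$ varies, and the nonvanishing you need would fail there. Since the paper's Lemma \ref{odd moments} already covers $\ell=2,\,q=0$, the cleanest thing is to cite it directly rather than rederive the non-stationarity.
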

\

\subsection{Motivation}
We proceed to describe two ideas that motivate our work. We first explain how the  underlying ideas in our approach are motivated by the random wave conjecture. We then relate our results to the physics notion of Loschmidt echo. \

\subsubsection*{Random wave conjecture}
In 1977 M. Berry conjectured that  the real and imaginary parts of the
eigenfunctions $\varphi_\h$ in the chaotic case  resemble random waves, \cite{Ber}.
It is also believed that the eigenfunctions $\varphi_\h$  of quantum
mixing systems behave locally as independent gaussian variables as $\h\to 0$;
see for example the discussion in \cite{HR} and references therein.
 One of the common issues is to define 
 a probability model where the random functions mimic the chaotic eigenfunctions.
 This is the role we give
to the perturbations $\varphi_{h,t}^{(u)}$.

\subsubsection*{Loschmidt echo}

A natural way of measuring the noise affecting a given system is the Loschmidt echo.
The idea behind this concept is to measure the sensitivity of quantum evolution to perturbations, by
propagating  forward an initial state $\psi$ using the unperturbed hamiltonian $p_0$, and propagating it
back via the perturbed one $p_u$  after time $t$. Thus, the objects
of interest in this case are the states $e^{\frac{it}{\h} P_u(\h)}e^{-\frac{it}{\h} P_0(\h)} \psi$ and the  \emph{Loschmidt Echo}, $M_{LE}(t)$,
is defined to be the return probablility to the initial state:
$$M_{LE}(t)=\left|\langle e^{-\frac{it}{\h} P_u(\h)}e^{\frac{it}{\h} P_0(\h)} \psi, \psi\rangle \right|^2.$$

   \begin{center}
           \parbox{2.5cm}{\fbox{\includegraphics[height=2.5cm]{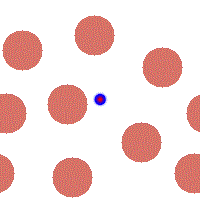}}}
           \hspace{0.2cm}
           $\xrightarrow[\hspace{1cm}]{e^{-\frac{it}{\h}P_0(\h)}}$
           \hspace{0.2cm}
           \parbox{2.5cm}{\fbox{\includegraphics[height=2.5cm]{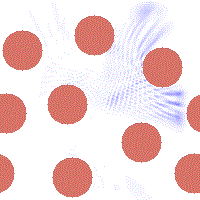}}}
           \hspace{0.2cm}
           $\xrightarrow{e^{\frac{it}{\h} P_u(\h)}}$
           \hspace{0.2cm}
           \parbox{2.5cm}{\fbox{\includegraphics[height=2.5cm]{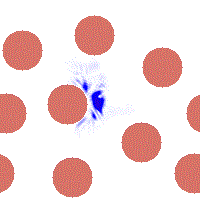}}}
     \end{center}

\begin{center}\ \\
\footnotesize{Illustration of the state of particle initially placed in the center of a square billiard with an irregular array of $10$ circular scatterers with initial momentum pointing to the left \cite{CPW}.}
\end{center}\ \\
\
We are interested in the case when the initial state $\psi$ is an eigenfunction, $\psi=\varphi_\h$. In this simpler case
$M_{LE}(t)$ is called the \emph{survival probability} \cite{WC} and we have
$$e^{\frac{it}{\h} P_u(\h)}e^{-\frac{it}{\h} P_0(\h)} \varphi_h= e^{-\frac{it E(\h)}{\h}} \varphi_{h,t}^{(u)}.$$
To be precise, for an initial state $\varphi_\h$
 the Loschmidt Echo is simply
	$$M_{LE}(t)=\left|\langle \varphi_{h,t}^{(u)}, \varphi_\h \rangle \right|^2.$$
 As the definition shows,
 the fidelity $M_{LE}(t)$ can be interpreted as the decaying overlap between the evolution
 $\varphi_{h,t}^{(u)}$ and the unperturbed evolution $ \varphi_h$,
 \cite{JP, JSB, Per}.\\

 In recent work \cite{ET},  Eswarathasan and Toth have proved related results for \emph{magnetic} deformations of the Hamiltonian $p_0(x,\xi) = |\xi|_{g_0(x)}^2 + V(x)$. We extend their upper bound results
 to large families of metric deformations. In additon, we characterize the asymptotic results in terms of variance and show that all odd moments are negligible. Although we do not have a rigorous argument at the moment, we hope that by further developing the methods of the present paper, we will be able to  compute the higher
even moments $\lim _{\h \to 0^+} \;\mathbb E [\Re(\varphi_{h,t}^{(\cdot)}(x))]^{2p}$ for $ p \geq 2,$  and compare them with the Gaussian prediction of the random wave model. We plan to return to this question elsewhere.


\subsection{Outline of the paper}\ \\

In Section \ref{section: setting} we introduce the background material and notation from semiclassical analysis that we shall use to prove our results. We first show that the perturbations are semiclassically localized in $p_0^{-1}(E)$ and then explain
how to microlocally cut off the propagator $e^{-\frac{it}{\h} P_u(\h)}$ to obtain a localized
approximation of $\varphi_{h,t}^{(u)}$.\\ 

In Section \ref{section: ell>0}, we study the odd moments of  $\Re(\varphi_{h,t}^{(\cdot)}(x))$.
 Provided the metric perturbation satisfies part \eqref{cond 2} of the admissibility condition at $x_*\in M$,
  we prove in Lemma \ref{odd moments} that for fixed $\tilde p \in \mathbb Z^+$ 
and  $\ell, q \in \mathbb Z^+$ with $1 \leq \ell \leq \tilde p$ and $2q \leq \tilde p$,
	 $$\int_{B^k(\varepsilon)} \left(\varphi_{h,t}^{(u)}(x)\right)^\ell
	 	   \left|\varphi_{h,t}^{(u)}(x)\right|^{2q} \;d \nu(u)  = \mathcal O (\h^\infty)
	 	   \qquad \quad  \text{as}\;\; \h\to 0^+,$$
 for   $\varepsilon>0$ and $|t| >0$ sufficiently small depending on $(M,g_0)$ and $\tilde{p}$.
The error is locally uniform in $x \in \mathcal W \cap (V^{-1}(E))^c$ where $\mathcal W$ is the open neighborhood of $x_*$ given by part \eqref{cond 2} of the admissibility condition.

Using  Lemma \ref{odd moments}  and the binomial expansion for $(\varphi + \bar \varphi)^p=(2 \Re \varphi)^p$,
we prove that  for $p \in \mathbb Z^+$ odd,
\begin{equation}\label{negligible odd 1}
\mathbb E \left[ \Re(\varphi_{h,t}^{(\cdot)}(x)) \right]^p= \mathcal O(\h^\infty),
\end{equation}
locally uniformly
 in $\mathcal W \cap (V^{-1}(E))^c$.\\

In Section  \ref{section: variance}  we study the variance of  $\Re(\varphi_{h,t}^{(\cdot)}(x)).$
Provided the perturbation is admissible at $x \in (V^{-1}(E))^c$,
the case  $p=1$ in \eqref{negligible odd 1} shows that our variables are semiclassically centered in the sense
$$\mathbb E \left[ \Re(\varphi_{h,t}^{(\cdot)}(x)) \right]= \mathcal O(\h^\infty).$$
Therefore,
\begin{equation} \label{eq:variance}
Var \left[ \Re(\varphi_{h,t}^{(\cdot)}(x)) \right] =
\int_{B^k(\varepsilon)} | \varphi_{h,t}^{(u)}(x)|^2\,d\nu(u)  +\mathcal O(\h^\infty).
\end{equation}
It follows that studying the variance is equivalent to understanding the behavior of the right hand side in the
previous equality.  In Proposition \ref{A_x} we compute the asymptotics of the RHS in (\ref{eq:variance}) and consequently prove Theorems \ref{odd moments and variance} and \ref{moments function}.\\

In Section \ref{section: admissible perturbations} we show that  there always exist large families
of admissible perturbations. We show that the notion of admissibility is related to having sufficiently many volume preserving directions in which the metric tensor $g_0$ is perturbed (this is condition A), and to having at least one direction in which $g_0$ is conformally perturbed (this is condition B).\\

\begin{remark} \label{restriction} We note here that there is an easy consequence of Theorem 1 (see also Remark \ref{upper}) that concerns restriction bounds of $\varphi_{h,t}^{(u)}$ to smooth submanifolds $H \subset M$ under the assumption that the family $g_u$ is admissible for all $x \in M.$ 
Indeed, since our upper bounds   are then  uniform in $x \in M$ (see Remark \ref{upper}), by integrating over $H$ and applying Fubini, one gets that for $\h \in (0,\h_0],$ there exist a constant $C = C(H,h_0) >0$ with
$$\int_{B^k(\varepsilon)} \int_H |\varphi_{h,t}^{(u)}(s)|^2 \, d\sigma_H(s) d \nu(u) \leq C.$$
By the Tschebyshev inequality, it then follows that for {\em any} sequence $\omega(\h) = o(1)$ as $\h \to 0^+,$ there is a measurable  $D(h) \subset B^k(\epsilon)$ with $\lim_{h\to 0^+} \frac{|D(h)|}{|B^k(\epsilon)|} = 1$ such that for $u \in D(h),$
$$\int_H |\varphi_{h,t}^{(u)}(s)|^2 d\sigma_H(s) = {\mathcal O}( |\omega(\h)|^{-1}).$$
Therefore, the restriction bounds for most perturbed eigenfunctions are much smaller  than the universal bounds for $ \int_H |\varphi_{\h,t}^{(0)}(s)|^2 d \sigma_H(s)$ in \cite[Theorem 3]{BGT} and tend to be consistent with the ergodic case \cite{DZ,TZ}.
\end{remark}

\subsection{Acknowledgement}
 The authors would like to thank the referee for many helpful detailed comments on the manuscript and for pointing out an error in the previous version of the paper.

\section{Background and Notation}\label{section: setting}

 In this section we introduce
some background material on eigenfunction localization and semiclassically cut off propagators. Most of this is standard in semiclassical analysis, but we include it for the benefit of the reader. We refer to \cite{Zw} for further details.\\
Let $M$ be a compact Riemannian manifold of dimension $n$.
We work with the class of semiclassical symbols
\begin{align*}
S^{m,k}_{cl} (T^*M) :=  \Big\{ a \in C^{\infty}(T^*M \times& (0,\h_0] ):  \;\;
a(x,\xi;\h) \sim_{\h \to 0^+} \h^{-m} \sum_{j=1}^\infty a_j(x,\xi) \,\h^j\;   \\
&\text{with}\;\;\;
  |\partial^\alpha_x \partial^\beta_\xi a_j(x,\xi)| \leq C_{\alpha, \beta}\, (1+|\xi|^2)^{\frac{k - |\beta|}{2}} \Big \}.
\end{align*}
For $a \in S^{m,k}_{cl} (T^*M) $, we consider the Schwartz kernel in $M \times M$  locally  of the form
$$Op_\h (a) (x,y) = \frac{1}{(2\pi \h)^n} \int_{\R^n} e^{\frac{i}{\h} \langle x-y, \xi \rangle} a(x,\xi;\h) \,d\xi,$$
for $(x,y) \in U \times V$ where $U,V \subset \R^n$ are local coordinate charts.
The corresponding space of pseudodifferential operators is defined to be
$$\Psi^{m,k}_{cl} (M):= \{ Op_\h (a): \;\;a \in S^{m,k}_{cl} (T^*M)\}.$$
Let $N$ be another compact  n-dimensional  Riemannian manifold.
We also consider the class of Fourier integral operators $I_{cl}^{m,k}(M \times N, \Gamma)$
with Schwartz kernels defined in the form
$$F_\h (x,y) = \frac{1}{(2\pi \h)^n} \int_{\R^n} e^{\frac{i}{\h} \phi(x,y, \xi )} a(x, y,\xi;\h) \,d\xi$$
for $(x,y) \in U \times V$ where $U,V \subset \R^n$ are local coordinate charts
and $a \in C^\infty_0(U\times V \times \R^n \times (0,\h_0])$ with
$a(x, y,\xi;\h) \sim_{\h \to 0^+} \h^{-m} \sum_{j=1}^\infty a_j (x, y,\xi) \,\h^j$.
Here $\phi$ denotes a non-degenerate phase function in the sense of
H\"{o}rmander \cite[Def (2.3.10)]{Dui} and $\Gamma$ is an immersed Lagrangian submanifold of $T^*M \times T^*M$ with
$$\Gamma=\{ (x,d_x \phi,\; y, -d_y \phi):\;\; d_\xi \phi (x,y,\xi)=0 \} \subset T^*M \times T^*N.$$
Finally, throughout the manuscript we say that a sequence of $L^2$-normalized eigenfunctions $\{\varphi_{\h_j}\}_{j\geq 1}$ of $P_0(\h_j)$ with $P_0(\h_j) \varphi_{\h_j} = E(\h_j) \varphi_{\h_j}$ and $E(\h_j) = E + o(1)$ is quantum ergodic (QE) if for any $a(x,\xi,\h) \sim \sum_{k=0}^{\infty}a_k(x,\xi) \h^k \in S^{0,0}_{cl}(T^*M \times [0,h_0)),$

\begin{equation}\label{QE}
 \langle Op_{\h_j}(a) \varphi_{\h_j}, \varphi_{\h_j} \rangle\;
  \underset{j \to \infty}{\longrightarrow}
  \;\int_{p_0^{-1}(E)} a_0(x,\xi) d\omega_E(x,\xi)
\end{equation}
where, $d\omega_E$ is normalized Liouville measure on $p_0^{-1}(E).$


\subsection{Eigenfunction localization}\label{localization}
 Fix  $t \neq 0$ and let $E$  be a regular value of $p_0.$ Then, for  $u\in B^k(\varepsilon)$  we introduce the cut-off
 functions  on $T^*M$ 
 \begin{equation}\label{chi}
 \chi_{E}^{(u)}(x,\xi)=\chi_0 \left( p_0(G^{-t}_u(x,\xi))-E \right),
\end{equation}
where $\chi_0 \in C^\infty_0 ([-\epsilon,\epsilon]; [0,1])$ equal to $1$ on $[-\epsilon/2,\epsilon/2].$  Consequently, 
\begin{equation} \label{suppfunction1}
\text{supp} \, \chi_{E}^{(0)} \subset p_{0}^{-1}( [E-\varepsilon, E+\varepsilon]). \end{equation}
 Since $\chi_E^{(u)}(x,\xi)=\chi_0(p_0(x,\xi)-E+ \mathcal O (|u|))$, the support of $\chi_E^{(u)}$ remains localized near
the hypersurface $p_0^{-1}(E)$ for all $u \in B^k(\varepsilon)$  (see (\ref{constant}) below for precise control) and that $\varphi_{h,t}^{(u)}$ is a normalized eigenfunction of the operator
	$$Q_u(\h):=e^{-\frac{i}{\h} t P_u(\h)} P_0(\h) e^{\frac{i}{\h}t P_u(\h)} \in \Psi_{cl}^{0,2}(M)$$
with eigenvalue $E(\h)$.
By Egorov's Theorem
$Q_u(\h)=Op_\h(p_0 \circ G_u^{-t}) +\mathcal O_{L^2\to L^2}(\h)$, and since  $E(\h) \in [E- o(1),E+ o(1)]$, it follows that $(Q_u(\h)-E)\varphi_{h,t}^{(u)}=o(1)$. Using that $Q_u(\h)$ is
$\h$-elliptic off $(p_0 \circ G_u^{-t})^{-1}(E)$, a parametrix construction
\cite[Thm. 6.4] {Zw} gives
$\| \varphi_{h,t}^{(u)} - Op_\h(\chi_E^{(u)}) \varphi_{h,t}^{(u)}\|_{L^2}= \mathcal O(\h^\infty)$ and therefore
 $WF_\h(\varphi_{h,t}^{(u)})\subset(p_0 \circ G_u^{-t})^{-1}(E)$.
  Fix $t \neq 0$ and $\epsilon >0$ small.   Given $x \in M$, there is a coordinate chart $U$ with $x \in U$ and such that $\pi( G^{-t}_u(x,\xi)) \subset U$ for $u \in B^k(\varepsilon).$ Then, since $p_0(G_0^{-t}(x,\xi)) = p_0(x,\xi),$  by Taylor expansion around $u=0,$ 
  \begin{align} \label{taylor expansion}
p_0 ( G_u^{-t} (x,\xi) ) &= p_0(x,\xi) + R_1(x,\xi;u,t)
  \end{align}
 where 
 $$ |R_1(x,\xi,u;t)| \leq \sqrt{k} \max_{u \in B^k(\varepsilon)}   \| d_u  (p_0  \circ G_u^{-t})(x,\xi)\| \,|u|.$$
 
 Here, given a arbitrary matrix $A= (a_{ij})$ we write $\|A\|:= \max_{i,j} |a_{ij}|.$
 
 For $t \neq 0$ fixed, we define the constant
 \begin{equation} \label{constant}
 c:= k \max_{u \in B^{k}(\varepsilon)}  \max_{(p_0 \circ G_u^{-t})^{-1}(E)}  \| d_u  ( p_0 \circ G_u^{-t})(x,\xi)\|.\end{equation}
 It then follows from (\ref{taylor expansion}) that  
\begin{equation} \label{energyshell}
(p_0 \circ G_u^{-t})^{-1}(E)\subset p_0^{-1}(E-c\|u\|, E+c\|u\|)) \end{equation}
 for $c>0$  in (\ref{constant}) and  $u \in B^k(\varepsilon).$ 
 Consequently,
$WF_\h(\varphi_{h,t}^{(u)}) \subset p_0^{-1}(E-c\varepsilon, E+c\varepsilon)$ and 
by a Sobolev lemma argument one can also prove
 $\| \varphi_{h,t}^{(u)} - Op_\h(\chi_E^{(u)}) \varphi_{h,t}^{(u)}\|_{C^k}= \mathcal O_{C^k}(\h^\infty)$.
It follows that
 \begin{equation}\label{approx of varphi}
 \varphi_{h,t}^{(u)}=Op_\h( \chi^{(u)}_E) \circ e^{-\frac{it}{\h} P_{u}(\h)}  \circ Op_\h( \chi^{(0)}_E) \;\varphi_\h+ \mathcal O_{C^k}(\h^\infty),
 \end{equation}
 and from (\ref{energyshell}) and (\ref{suppfunction1}), with $c>0$ in (\ref{constant}),
 \begin{equation} \label{suppfunction2}
 \text{supp} \, \chi_{E}^{(u)} \subset p_{0}^{-1}([E-\epsilon - c\|u\|, E+\epsilon + c\|u\|]). \end{equation} 

\subsection{Semiclassically cut off propagators}

Motivated by the approximation \eqref{approx of varphi}, for $\h \in (0, \h_0]$, $u\in B^k(\varepsilon)$ and $|t|>0$ small, we define the semiclassically cut off
Fourier integral operators $W_{t,u}(\h)\in I_{cl}^{0, -\infty}(M \times M, \Gamma_{u,t}),$ 
\begin{equation}\label{W_{t,u}}
	W_{t,u}(\h):=Op_\h( \chi^{(u)}_E) \circ e^{-\frac{it}{\h} P_{u}(\h)}  \circ Op_\h( \chi^{(0)}_E),
\end{equation}
with  immersed Lagrangian,
\
\begin{align*}
\Gamma_{u,t}=& \left\{(x,\xi; y, \eta): \; (x,\xi)=G_u^{-t}(y,\eta), 
 \quad (y,\eta) \in supp \,\chi_E^{(0)}  \right\}
 \subset T^*M \times T^*M.
\end{align*}
\
We note that since $G^{-t}_u$ is then a 
symplectomorphism that is close to the identity,
 there exists a local generating function $S(s, u, \xi; x)$
with $(x,d_xS(s,u,\eta;x))=G_u^{-t}(d_\eta S(s,u,\eta;x,\eta),\eta)$ for $s$ close to $t$.
It follows that
\
\begin{align}\label{Gamma_{u}}
\Gamma_{u,t}
&= \left\{\left(x,d_xS(t, u, \eta; x) ;\; d_\eta S(t, u, \eta; x), \eta \big.\right)
\in supp \,\chi_E^{(u)} \times supp \,\chi_E^{(0)} \right\}  \notag \\
 &\hspace{10cm} \subset T^*M \times T^*M.
\end{align}
\
The generating function $S(s,u,\eta;x)$ solves  the Hamilton-Jacobi initial value problem
	$$\begin{cases}
	\partial_s S(s,u,\eta;x)+ p_u(x,d_x S(s,u,\eta;x))=0, \\
	S(0,u,\eta;x)=\langle x, \eta \rangle,
 	\end{cases}$$
and therefore, a Taylor expansion in $s$ around $s=0$ gives
\begin{equation}\label{Taylor for S}
S(s,u,\eta;x)=\langle x,\eta \rangle -s \,p_u(x,\eta) + \mathcal O(s^2).
\end{equation}

Given local coordinate charts $U,V \subset \R^n$  consider the local phase function
$\phi_t \in C^\infty(V \times B^k(\varepsilon) \times \R^n)$,
	\begin{equation}\label{phase function}
	\phi_t(y,u,\xi;x):= S(t, u, \xi; x )-\langle y, \xi \rangle.
	\end{equation}
The Schwartz kernel of $ W_{t,u}(\h)$  is locally  of the form
	\begin{equation}\label{kernel of W_{t,u}}
	 W_{t,u}(\h)(x,y)= \frac{1}{(2\pi h)^n} \int_{\R^n} e^{\frac{i}{\h} \phi_t(y,u, \xi; x) } a_t(u,y,\xi;x,\h) \; d\xi + K_x(y,u),
	\end{equation}
where $|\partial^\alpha_x \partial^\beta_y K_x(y,u)|= \mathcal O_{\alpha,\beta} (\h^\infty)$ uniformly for
$(y,x,u) \in V \times U \times B^k(\varepsilon).$ \\
The amplitude $ a_t(u,y,\xi;x,\h) \sim \sum_{j=0}^\infty a_{t,j}(u,y,\xi;x)\h^j$ with
$$a_{t,j}(u, \cdot ,\cdot \,; \cdot) \in
 C^\infty( B^k(\varepsilon), C_0^\infty(V \times \R^n \times U)).$$
 \noindent  From (\ref{energyshell}) it is clear that the support of $\chi_E^{(u)}$ remains localized near
 the hypersurface $p_0^{-1}(E)$ for all $u \in B^k(\varepsilon);$ indeed, with the constant $c>0$ in (\ref{constant}),
 \begin{align}\label{amplitudes}
 supp\;(a_t(u, \cdot, \cdot \,; \cdot , \h))  \subset \notag 
 \{(y,\xi,x) \in T^*U \times V: &(y,\xi)\in \text{supp} \chi_E^{(0)} \subset p_0^{-1}(E-\varepsilon, E+\varepsilon),\\
 & y = d_{\xi}S(t,u,\xi;x) = x + {\mathcal O}_{u}(t) \}.
 \end{align}
 We note that since $t \neq 0$ is a fixed small parameter and $u \in B^k(\varepsilon)$ with $\epsilon >0$ small, it follows from (\ref{amplitudes}) that  the amplitudes $a_{t,j}(u,\cdot,\cdot;\cdot)$ are supported near the set $\{(y,\xi,y) \in T^*U \times U\}.$ Similarily, the Lagrangian manifolds $\Gamma_{t,u} \subset T^*M \times T^*M$ are localized near the diagonal $\Delta_{T^*M \times T^*M} = \{ (x,\xi;x,\xi) \in T^*M \times T^*M \}.$ 


\section{Odd moments}\label{section: ell>0}
The purpose of this section is to show that provided the metric $g_0$ is conformally deformed in at least one
 direction, its odd moments  are negligible for general geodesic flows.
Throughout this section we continue to assume that $(M,g_0)$ is a compact Riemannian manifold and $E$ is a regular value of $p_0$.
We prove
\begin{proposition}\label{negligible odd moments}
Let $g_u$ with $u \in B^k(\varepsilon)$ be a perturbation of $g_0$
that satisfies part \eqref{cond 2} of the admissibility condition at  $x_*\in M$.  Fix $\tilde{p} \in {\mathbb Z}^+$ and suppose $p  \in {\mathbb Z}^+$ is odd with $p \leq \tilde{p}.$ Then, for $\varepsilon>0$ and $|t| >0$ sufficiently small depending on $(M,g_0)$ and $\tilde{p},$  
\begin{equation}\label{negligible odd}
\mathbb E \left[ \Re(\varphi_{h,t}^{(\cdot)}(x)) \right]^p= \mathcal O(\h^\infty) \qquad \quad  \text{as}\;\; \h\to 0^+,
\end{equation}
locally uniformly for $x \in \mathcal W \cap (V^{-1}(E))^c.$ Here $\mathcal W$ is the open neighborhood of $x_*$ given by part \eqref{cond 2} of the admissibility condition. \end{proposition}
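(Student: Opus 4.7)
The plan is to reduce the claim to a mixed-moment estimate and then prove that estimate by non-stationary phase in the conformal parameter direction supplied by admissibility condition B. Writing $2\Re\varphi = \varphi + \bar\varphi$ and expanding,
$$2^p \bigl[\Re\varphi_{h,t}^{(u)}(x)\bigr]^p = \sum_{m=0}^p \binom{p}{m} \bigl(\varphi_{h,t}^{(u)}(x)\bigr)^m \bigl(\overline{\varphi_{h,t}^{(u)}(x)}\bigr)^{p-m}.$$
Since $p$ is odd, $m \neq p-m$ in every term; setting $\ell = |2m - p| \geq 1$ and $q = \min(m, p-m)$, each summand is, up to complex conjugation, of the form $(\varphi_{h,t}^{(u)})^\ell\, |\varphi_{h,t}^{(u)}|^{2q}$ with $\ell$ odd and $\ell + 2q = p \leq \tilde p$. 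It therefore suffices to prove
$$\int_{B^k(\varepsilon)} \bigl(\varphi_{h,t}^{(u)}(x)\bigr)^\ell \bigl|\varphi_{h,t}^{(u)}(x)\bigr|^{2q}\, d\nu(u) = \mathcal O(\h^\infty)$$
locally uniformly on $\mathcal W \cap (V^{-1}(E))^c$, for every such $(\ell, q)$.

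I would substitute the approximation \eqref{approx of varphi} together with the oscillatory representation \eqref{kernel of W_{t,u}} into each of the $\ell + q$ copies of $\varphi_{h,t}^{(u)}$ and $q$ copies of its conjugate, picking up an $\mathcal O_{C^k}(\h^\infty)$ remainder. The resulting integral carries a combined phase $\h^{-1}\Psi$ with
$$\Psi = \sum_{j=1}^{\ell+q}\bigl[S(t,u,\xi_j;x) - \langle y_j,\xi_j\rangle\bigr] - \sum_{j=1}^{q}\bigl[S(t,u,\xi'_j;x) - \langle y'_j,\xi'_j\rangle\bigr],$$
and amplitude compactly supported in $u$ by $\chi^2$ and in the momenta near $p_0^{-1}(E)$ by \eqref{suppfunction2}. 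Condition B furnishes $u_\alpha$ with $\delta_{u_\alpha} g_u^{-1}(x) = a(x)\, g_0^{-1}(x)$ and $a(x)\neq 0$ on $\mathcal W$; combined with \eqref{Taylor for S} and $p_u(x,\xi) = |\xi|^2_{g_u} + V$ this yields $\partial_{u_\alpha} S(t,u,\xi;x)\big|_{u=0} = -t\, a(x)\, |\xi|^2_{g_0(x)} + \mathcal O(t^2)$. On the momentum support, $|\xi|^2_{g_0(x)} = p_0(x,\xi) - V(x) = (E - V(x)) + \mathcal O(\varepsilon + |u|)$, and the $q$ paired $\varphi$--$\bar\varphi$ factors cancel in $\partial_{u_\alpha}\Psi$, leaving the $\ell$ unpaired $\varphi$-factors summing coherently:
$$\partial_{u_\alpha}\Psi\big|_{u=0} = -t\, a(x)\, \ell\, (E - V(x)) + \mathcal O\bigl(t(\varepsilon + |t|)\bigr).$$
This cancellation is the whole reason condition B is the right assumption: it makes $\partial_{u_\alpha} p_u$ proportional to the metric symbol $|\xi|^2_{g_0}$, which is nearly constant on the energy shell, so the unpaired factors reinforce rather than average out.

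For $\ell \geq 1$, $x$ in a compact subset of $\mathcal W \cap (V^{-1}(E))^c$, $t \neq 0$, and $\varepsilon, |t|$ chosen small enough in terms of $\tilde p$ and the local lower bounds on $|a|$ and $|E - V|$, one obtains $|\partial_{u_\alpha}\Psi| \geq c_0 > 0$ uniformly on the support of the integrand for all $u \in B^k(\varepsilon)$. Repeated integration by parts in $u_\alpha$ against $e^{i\Psi/\h}$, using that $\chi^2$ vanishes near $\partial B^k(\varepsilon)$ so that no boundary terms arise, produces the $\mathcal O(\h^\infty)$ bound and hence the proposition via the binomial reduction above. The main obstacle is the uniform bookkeeping across iterations: each integration by parts differentiates the compound amplitude, which is a product of $\ell + 2q$ factors in many variables, in $u_\alpha$ and divides by $\partial_{u_\alpha}\Psi$, and one must verify that the iterates remain in a fixed symbol class with seminorms uniform in $\h$, so that $N$ integrations by parts can be performed for every $N$ under a single choice of $(\varepsilon, t_0)$ valid simultaneously for all pairs $(\ell, q)$ with $\ell + 2q \leq \tilde p$.
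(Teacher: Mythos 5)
Your proposal is correct and follows essentially the same path as the paper: binomial expansion of $(2\Re\varphi)^p$ reducing to mixed moments $\int(\varphi_{h,t}^{(u)})^\ell|\varphi_{h,t}^{(u)}|^{2q}\,d\nu$, the oscillatory integral representation from \eqref{kernel of W_{t,u}}, the observation that condition B plus \eqref{Taylor for S} forces $\partial_{u_\alpha}\Phi = -t\,a(x)\,\ell\,(E-V(x)) + \mathcal O(t\varepsilon + t^2)$ on the energy-shell support, and repeated integration by parts in $u_\alpha$ with no boundary terms because $\chi$ vanishes near $\partial B^k(\varepsilon)$. The one minor imprecision is saying the $q$ conjugate-pair terms "cancel" --- they cancel only up to the $\mathcal O(\varepsilon)$ spread coming from the shell thickness, which you in fact correctly account for in your final error term, so nothing is lost.
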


\begin{proof}
 Given $p \leq \tilde p$ odd,
$$\mathbb E \left[\Re(\varphi_{h,t}^{(\cdot)}(x))\right]^p= \int_{B^k(\varepsilon)} \left(\Re(\varphi_{h,t}^{(u)}(x))\right)^p \;d \nu(u),$$
and for any complex $\varphi$ the binomial expansion of $(\varphi + \bar \varphi)^p=(2 \Re \varphi)^p$ for $p$ odd
gives
	\begin{equation}\label{binomial expansion}
	(\Re \varphi)^p
	 	= 
	 	\frac{1}{2^p}\sum_{0\leq j <\frac{p}{2}} \binom{p}{j} \varphi^{p-2j}\, |\varphi|^{2j}
	 	+\frac{1}{2^p}\sum_{\frac{p}{2}< j \leq p} \binom{p}{j} \bar \varphi^{\;2j-p}\, |\varphi|^{2(p-j)}.
	\end{equation}

Therefore, to prove Proposition \ref{negligible odd moments}, it suffices to show that 
\begin{equation}\label{moments}
	\int_{B^k(\varepsilon)} \left(\varphi_{h,t}^{(u)}(x)\right)^\ell
	\left|\varphi_{h,t}^{(u)}(x)\right|^{2q} \;d \nu(u)= \mathcal O(\h^\infty) \quad \quad \text{for }\;\;\, 1 \leq \ell \leq \tilde p, \, \; 2q \leq \tilde p, \;
\end{equation}
locally uniformly in $x \in \mathcal W\cap (V^{-1}(E))^c $ as $\h\to 0^+$.

Since the proof of \eqref{moments} is somewhat technical, we prove it  separately as Lemma \ref{odd moments}.
Combining \eqref{moments} with the binomial expansion \eqref{binomial expansion}
 completes the proof.
\end{proof}

\ \\
We have reduced the proof of Proposition \ref{negligible odd moments} to establishing the following Lemma.

\begin{lemma}\label{odd moments}
	Let $g_u$ with $u \in B^k(\varepsilon)$ be a perturbation of $g_0$
that satisfies part \eqref{cond 2} of the admissibility condition at $x_*\in M$. Fix $\tilde p \in \mathbb Z^+$ 
and suppose  $\ell, q \in \mathbb Z^+$ with $1 \leq \ell \leq \tilde p$ and $2q \leq \tilde p$.
Then, for   $\varepsilon>0$ and $|t| >0$ sufficiently small depending on $(M,g_0)$ and $\tilde{p}$,
	 $$\int_{B^k(\varepsilon)} \left(\varphi_{h,t}^{(u)}(x)\right)^\ell
	 	   \left|\varphi_{h,t}^{(u)}(x)\right|^{2q} \;d \nu(u)  = \mathcal O (\h^\infty)
	 	   \qquad \quad  \text{as}\;\; \h\to 0^+,$$
locally uniformly for $x \in \mathcal W \cap (V^{-1}(E))^c$. Here $\mathcal W$ is the open neighborhood of $x_*$ given by part \eqref{cond 2} of the admissibility condition.
	\end{lemma}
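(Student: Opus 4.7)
The plan is to unfold each factor $\varphi_{\h,t}^{(u)}(x)$ via the Fourier integral operator representation \eqref{approx of varphi}--\eqref{kernel of W_{t,u}}, and then exploit the conformal direction $u_\alpha$ furnished by part \eqref{cond 2} of the admissibility condition to integrate by parts an unbounded number of times in $u_\alpha$. Writing $|\varphi_{\h,t}^{(u)}(x)|^{2q}=(\varphi_{\h,t}^{(u)}(x))^{q}\,\overline{(\varphi_{\h,t}^{(u)}(x))}^{\,q}$, the $u$-integral becomes, modulo $\mathcal{O}(\h^\infty)$, an $(\ell+2q)$-fold oscillatory integral with total phase
\begin{equation*}
\Phi(u,\vec y,\vec\xi,\vec y',\vec\xi';x)=\sum_{i=1}^{\ell+q}\phi_t(y_i,u,\xi_i;x)-\sum_{j=1}^{q}\phi_t(y'_j,u,\xi'_j;x),
\end{equation*}
and with amplitude $A$ given by a product of the factors $a_t$ of \eqref{kernel of W_{t,u}} together with their conjugates, smooth in $u\in B^k(\varepsilon)$ and compactly supported in $(\vec y,\vec\xi,\vec y',\vec\xi')$.

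First I would compute $\partial_{u_\alpha}\Phi$ on the support of the amplitude. Differentiating the Hamilton--Jacobi Taylor expansion \eqref{Taylor for S} gives $\partial_{u_\alpha}\phi_t(y,u,\xi;x)=-t\,\partial_{u_\alpha}p_u(x,\xi)+\mathcal{O}(t^2)$, and part \eqref{cond 2} of the admissibility condition yields $\partial_{u_\alpha}p_u(x,\xi)|_{u=0}=a(x)|\xi|^{2}_{g_0(x)}$ with $a(x)\neq 0$ throughout $\mathcal{W}$. From the support constraints in \eqref{amplitudes}, each $y$-variable satisfies $y=x+\mathcal{O}(t)$ and each $\xi$-variable satisfies $|\xi|^{2}_{g_0(x)}=E-V(x)+\mathcal{O}(\varepsilon+t)$. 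Summing the $\ell+q$ positive and $q$ negative contributions produces the crucial non-cancellation
\begin{equation*}
\partial_{u_\alpha}\Phi \;=\; -\,\ell\,t\,a(x)\bigl(E-V(x)\bigr)\;+\;\mathcal{O}\bigl(t\varepsilon+t^2\bigr).
\end{equation*}
Because $\ell\geq 1$ and both $a(x)$ and $E-V(x)$ are bounded away from $0$ on every compact $K\subset\mathcal{W}\cap V^{-1}(E)^c$, this yields $|\partial_{u_\alpha}\Phi|\geq c_0|t|>0$ uniformly on the support, once $\varepsilon$ and $|t|$ are taken sufficiently small depending only on $(M,g_0)$, $\tilde p$, and $K$.

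With this non-stationarity in $u_\alpha$ at hand, I would apply the operator $L=\tfrac{\h}{i\,\partial_{u_\alpha}\Phi}\partial_{u_\alpha}$, which satisfies $L(e^{i\Phi/\h})=e^{i\Phi/\h}$, and integrate by parts $N$ times in $u_\alpha$. The compact support of $\chi^2$ and of $A$ in $u$ eliminates all boundary terms; smoothness of $\partial_{u_\alpha}\Phi$ together with the lower bound $|\partial_{u_\alpha}\Phi|\geq c_0|t|$ make the adjoint $L^{*}$ and all its iterates produce uniformly bounded coefficients (for $t$ fixed), while each iteration extracts a factor of order $\h$. Combined with the elementary bound $\int_{B}|\varphi_\h(y)|\,dy\leq|B|^{1/2}\|\varphi_\h\|_{L^2}=\mathcal{O}(1)$ in each of the $\ell+2q$ copies of the $y$-integration, $N$ iterations yield an estimate of order $\h^{N-n(\ell+2q)}$, hence $\mathcal{O}(\h^\infty)$. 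The principal technical obstacle will be ensuring that the lower bound on $|\partial_{u_\alpha}\Phi|$ holds \emph{simultaneously} over $u\in\mathrm{supp}\,\chi$, across the full $(\ell+2q)$-fold product of the $(y,\xi)$-supports, for all $\ell,q\leq\tilde p$, and locally uniformly in $x$; this is handled by taking $\varepsilon$ small compared to $\min_{K}|E-V(x)|\cdot\|a\|_{L^\infty(\mathcal{W})}^{-1}$ and $|t|$ small compared to $\varepsilon$, which makes the error $\mathcal{O}(t\varepsilon+t^2)$ negligible relative to the main term $-\ell\,t\,a(x)(E-V(x))$ for every admissible $\ell\geq 1$.
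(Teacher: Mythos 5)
Your proposal is correct and follows essentially the same route as the paper's own proof: expand the integrand via the semiclassical FIO representation \eqref{approx of varphi}--\eqref{kernel of W_{t,u}}, compute $\partial_{u_\alpha}\Phi$ using \eqref{Taylor for S} and the conformal direction from condition (B) to get $-t\,a(x)\bigl(\ell(E-V(x))+\mathcal{O}(\varepsilon)\bigr)+\mathcal{O}(t^2)$ on the amplitude support, observe this is bounded away from zero on compact subsets of $\mathcal{W}\cap(V^{-1}(E))^c$ after shrinking $\varepsilon,|t|$ depending on $\tilde p$, and then iterate $\frac{\h}{i\,\partial_{u_\alpha}\Phi}\partial_{u_\alpha}$ with no boundary terms since $\chi$ vanishes on $\partial B^k(\varepsilon)$.
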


\begin{proof}
 We identify the product manifold 
 $M^{(\ell+2q)}$ with
$M^{(\ell)}\times M^{(q)}\times M^{(q)}$ and write 
$(\tilde y, \tilde z, \tilde z'):=(y ^{(1)}, \dots,  y^{(\ell)},   z ^{(1)}, \dots,  z^{(q)}, z'^{(1)}, \dots,  z'^{(q)})  \in V^{(\ell+2q)}$ for the local coordinates .\ \\

 By assumption, there exists 
  $a \in C^\infty(M)$ so that $\delta_{u_\alpha}g_u^{-1}(x)=a(x)\,g_0^{-1}(x)$
  with $a(x) \neq 0$ for all $x \in \mathcal W$.
 Let us continue to write $\chi$ for the cut-off function appearing in the definition
  of the probability measure $\nu$, and $c_k(\varepsilon)$ for the corresponding normalizing factor in \eqref{normalizing factor}.
Since from \eqref{approx of varphi},
 $\varphi_{h,t}^{(u)}(x)=[W_{t,u}(\h)\varphi_\h](x)+ \mathcal O(\h^\infty),$
 writing $W_{t,u}(x,y)$ for the kernel of $W_{t,u}$  we get
\begin{align}\label{odd moments computation}
	& \int_{B^k(\varepsilon)}  \left(\varphi_{h,t}^{(u)}(x)\right)^\ell
	   \left|\varphi_{h,t}^{(u)}(x)\right|^{2q} \;d \nu(u)  = \nonumber\\
	& =\underset{  B^{k}(\varepsilon)} {\int}
	   \left([W_{t,u}(\h)\varphi_\h](x) \big.\right)^\ell
	   	   \left|[W_{t,u}(\h)\varphi_\h](x)\big.\right|^{2q} \;d \nu(u)  + {\mathcal O}(\h^{\infty})\notag\\
	& =c_k(\varepsilon) \underset{ B^{k}(\varepsilon)}{\int}  \underset{M^{\ell +2q}} {\int}
		B_{t,u}^{[\ell, q]}(\tilde y, \tilde z, \tilde z' ; x, \h) 
		 \varphi_\h (y^{(i)}) \varphi_\h (z^{(j)}) \overline{\varphi_\h (z'^{(j)})}  \chi^2(u)
		 d\tilde y d\tilde z d\tilde z' du + {\mathcal O}(\h^{\infty}),
\end{align}

where
  $B_{t,u}^{[\ell, q]} \in C^\infty(V^{(\ell+2q)}  \times U \times [0, \h_0))$   is defined by the formula
$$B_{t,u}^{[\ell, q]}(\tilde y, \tilde z, \tilde z' ; x, \h)
	:=   \underset{1\leq j \leq q}{\prod_{1\leq i \leq \ell}} W_{t,u}(x,y^{(i)})
		 W_{t,u}(x,z^{(j)})\overline{W_{t,u}(x,z'^{(j)})}.$$

From  \eqref{kernel of W_{t,u}} we deduce the kernel expansion
\begin{align}\label{kernel B}
	&B_{t,u}^{[\ell, q]} (\tilde y, \tilde z, \tilde z'; x, \h) =   \notag\\
	& =\frac{ 1}{(2 \pi \h)^ {n(2q +\ell)}}
	\;\underset{\R^{nq}}{\int}  \underset{\R^{nq}}{\int}   \underset{\R^{n\ell}}{\int}
	   e^{\frac{i}{\h} \Phi^{[\ell,q]}_t(\tilde y, \tilde z, \tilde z', u, \tilde \xi, \tilde \eta, \tilde \eta'  ;x)} \;
	    c^{[\ell,q]}_t(u,\tilde y, \tilde z,\tilde z',  \tilde \xi, \tilde \eta, \tilde \eta'; x, \h)\;  d\tilde \xi\, d\tilde \eta \,d\tilde \eta '\;\, \notag\\
	 &\qquad + K_{x}(\tilde y, \tilde z, \tilde z', u) ,
\end{align}
for $ \Phi^{[\ell,q]}_t$, $c^{[\ell,q]}$ and $K_x$ as follows:\\

\noindent \emph{(i)}\; The \underline{phase function $\Phi^{[\ell, q]}_t$} is defined by
		\begin{align}\label{Phi}
			&\Phi^{[\ell, q]}_t(\tilde y, \tilde z, \tilde z', u, \tilde \xi, \tilde \eta, \tilde \eta'  ;x):= \notag \\
			 &\quad=  \sum_{j=1}^\ell \phi_t \left ( y^{(j)}, u, \xi^{(j)}; x \big. \right)
			+\sum_{j=1}^q \phi_t \left (z^{(j)}, u,  \eta^{(j)};x \big. \right)
			    -\phi_t \left (z'^{(j)}, u,\eta'^{(j)};x \big.\right),
		\end{align}
		where $\phi_t$ is given in \eqref{phase function}.\\ \ \\
\		
\emph{(ii)}\;
The \underline{amplitude $ c^{[\ell, q]}_t$} satisfies
$$ c^{[\ell, q]}_t(u,\tilde y, \tilde z,\tilde z',  \tilde \xi, \tilde \eta, \tilde \eta'\,; x, \h)
\sim_{\h \to 0^+} \sum_{j=0}^\infty c_{t,j}^{[\ell, q]}(u,\tilde y, \tilde z,\tilde z',  \tilde \xi, \tilde \eta, \tilde \eta'\,; x)\h^j$$ with
$c_{t,j}^{[\ell, q]}(u, \cdot ,\cdot \,; \cdot) \in
 C^\infty\left( B^k(\varepsilon), 
 C_0^\infty(V^{(\ell+2q)} \times \R^{n(\ell+2q)} \times U)
 \big.\right)$
  for $U,V \subset \R^n$ local coordinate charts as in \eqref{phase function}.
Moreover,
   \begin{align}\label{support c}
		&supp \,( c^{[\ell, q]}_t(u,\tilde y, \tilde z,\tilde z', \cdot \,; x, \h) ) \subset \notag \\
	     &\left\{ (\tilde \xi, \tilde \eta,\tilde \eta'): \;\;
		(x,\xi^{(i)}),(x,\eta^{(j)}),(x,{\eta'}^{(j)}) \in p_0^{-1}(E-c\varepsilon, E+c\varepsilon), \, x \in U, \;\; \;\;
		i\leq \ell,\; j\leq q \right\} \notag \\
	   &\subset \R^{n(\ell+2q)}.
		\end{align}\ \\
\
\noindent \emph{(iii)}\; The \underline{residual operator $K_x$} satisfies
		$$|\partial^\alpha_x \,\partial_ {(\tilde y, \tilde z, \tilde z')}^\beta\,
		 K_x(\tilde y, \tilde z, \tilde z', u)|= \mathcal O_{\alpha,\beta}(\h^\infty)$$
\indent \indent \indent	locally uniformly in $(\tilde y, \tilde z, \tilde z', u)  \in  V^{(\ell +2q)}  \times
		B^k(\varepsilon).$ \\ \ \\

\noindent {\bf Claim.}  For $\varepsilon>0$ and $|t| >0$ sufficiently small, there exists $C=C(t, \varepsilon, E, g_0)>0$ such that for
$ (\tilde \xi,   \tilde \eta, \tilde \eta') \in supp \,( c^{[\ell, q]}_t(u,\tilde y, \tilde z,\tilde z',\cdot \,; x, \h) )$, 
 \begin{equation}\label{nonzero derivative}
  \left| \partial_{u_\alpha} \Phi^{[\ell,q]}_t(\tilde y, \tilde z, \tilde z', u, \tilde \xi, \tilde \xi', \tilde \eta, \tilde \eta'  ;x) \right | \geq C> 0, 
 \end{equation} 
where this bound holds locally uniformly for  $(\tilde y, \tilde z, \tilde z') \in M^{(\ell +2q)}$, 
$u\in B^{k}(\varepsilon)$, and
 $x \in \mathcal W \cap (V^{-1}(E))^c$.\\

\noindent To prove this claim we first observe that since $\delta_{u_\alpha} g_u^{^{-1}}(x)= a(x) g_0^{^{-1}}(x)$ 
for $x \in \mathcal W$, 
\begin{align}\label{p_u approx}
\delta_{u_\alpha} p_u(x, \xi)  = a(x)|\xi|^2_{g_0(x)}.
\end{align}

Also, from the Taylor expansion of
the generating function \eqref{Taylor for S} around $s=0$,  together with \eqref{phase function}, we know that for  $x \in \mathcal W$ 
 \begin{equation}\label{approx. phase fxn}
 \phi_t(y,u,\eta;x)= \langle x-y ,\eta \rangle -t p_u(x,\eta) + \mathcal O(t^2),
 \end{equation} 
 where in (\ref{approx. phase fxn}), the error $\mathcal O(t^2)$ depends on $\tilde{p}.$ 
 Combining \eqref{Phi} with  \eqref{approx. phase fxn} and \eqref{p_u approx}, for  $x \in \mathcal W$ we get
\begin{align*}
	&\partial_{u_\alpha} \Phi^{[\ell,q]}_t(\tilde y, \tilde z, \tilde z', u, \tilde \xi, \tilde \eta, \tilde \eta'  ;x)= \nonumber \\
	& \qquad \quad=-t\,a(x)\left(\sum_{i=1}^\ell  | \xi^{(i)}|^2_{g_0(x)}
	  +  \sum_{j=1}^q | \eta^{(j)} |^2_{g_0(x)}
	   - \sum_{j=1}^q |\eta'^{(j)}|^2_{g_0(x)} + {\mathcal O}(|u|) \right)
	 + \mathcal O(t^2). \nonumber \\
\end{align*} 
From the support conditions on the amplitude $c_t^{[l,q]}$ in \eqref{support c}, we have that
$ | \xi^{(j)} |^2_{g_0(x)} + V(x)= E +\mathcal O(\varepsilon)$, $ | \eta^{(j)} |^2_{g_0(x)} + V(x)= E +\mathcal O(\varepsilon)$
and $ | \eta'^{(j)} |^2_{g_0(x)} + V(x)= E +\mathcal O(\varepsilon)$ for $i\leq \ell$ and $j\leq q$. Therefore, for  $x \in \mathcal W$,
\begin{align} \label{ibp}
\partial_{u_\alpha} \Phi^{[\ell,q]}_t(\tilde y, \tilde z, \tilde z', u, \tilde \xi, \tilde \eta, \tilde \eta'  ;x)=-t\,a(x) \, \, \Big( \ell\, (E  - V(x)) + (\ell+2q+1) \mathcal O (\varepsilon) \Big) 
+ \mathcal O(t^2),
\end{align}
and so 
\begin{equation} \label{ibp2}
\left| \partial_{u_\alpha} \Phi^{[\ell,q]}_t(\tilde y, \tilde z, \tilde z', u, \tilde \xi, \tilde \xi', \tilde \eta, \tilde \eta'  ;x) \right |
\geq |t a(x)|  \cdot | \ell\, (E  - V(x)) + 2 \tilde p \mathcal O (\varepsilon) |
+ \mathcal O(t^2), \end{equation}
uniformly in all variables. 
Given a compact subset $K \subset \mathcal W \cap (V^{-1}(E))^c,$  for $x \in K,$ one has $|V(x) - E| \geq \frac{1}{C_0} >0$ for some constant $C_0>0.$ Since $a(x)\neq 0$  for all $x \in \mathcal W,$ from (\ref{ibp2}) it follows that for such a given compact  set $K$ and  number of odd moments $\tilde{p} \in {\mathbb Z}^+,$ we can choose $\epsilon >0$ and $t \neq 0$ sufficiently small (depending on $\tilde{p}$ and $K$) so that the RHS of (\ref{ibp2}) is uniformly bounded away from zero. 
 We conclude that the claim in \eqref{nonzero derivative} holds for  $x \in \mathcal W \cap (V^{-1}(E))^c$. \\

We then use the operator
$\Big( \frac{\h}{i\, \partial_{u_\alpha} \Phi^{[\ell,q]}_t} \Big) \,\frac{\partial }{\partial u_\alpha},$
 to repeatedly  integrate by parts in \eqref{odd moments computation} and obtain
\
$$B_{t,u}^{[\ell, q]}(\h) (\tilde y, \tilde z, \tilde z';x,h) = \mathcal O (\h^{\infty}) $$
locally uniformly for  $(\tilde y, \tilde z, \tilde z') \in  M^{(\ell +2q)}$, 
$u \in B^{k}(\varepsilon)$ and
 $x \in \mathcal W\cap (V^{-1}(E))^c$.
We note that there are no boundary term contributions arising from the integration by parts since
 $\chi(u)=0$ for $u \in \partial B^k(\varepsilon)$.
\
\noindent From \eqref{odd moments computation}  it follows that
\
$$\int_{B^k(\varepsilon)} \left(\varphi_{h,t}^{(u)}(x)\right)^\ell
\	   \left|\varphi_{h,t}^{(u)}(x)\right|^{2q} \;d \nu(u)= \mathcal O (\h^{\infty}),$$
locally uniformly in $x \in \mathcal W \cap (V^{-1}(E))^c$.\\

\end{proof}

  \section{Variance}\label{section: variance}

As explained in the Introduction (see (\ref{eq:variance})), provided the perturbation is admissible at $x \in (V^{-1}(E))^c$,
the  case $p=1$ in Proposition \ref{negligible odd moments}  shows that our random variables are semiclassically centered in the sense
$$\mathbb E \left[ \Re(\varphi_{h,t}^{(\cdot)}(x)) \right]= \mathcal O(\h^\infty).$$
Therefore,
$$Var \left[ \Re(\varphi_{h,t}^{(\cdot)}(x)) \right] =
\int_{B^k(\varepsilon)} | \varphi_{h,t}^{(u)}(x)|^2\,d \nu(u) +\mathcal O(\h^\infty).$$
It then follows that studying the variance is equivalent to understanding the behavior of the right hand side in the
previous equality.  We compute the asymptotics of the RHS in the next Proposition.
\begin{proposition}\label{A_x}
Let $g_u$ be admissible at $x_* \in M$  and let $\mathcal U$ be the neighborhood of $x_*$ given in Remark \ref{neighborhood U}.
  For $\varepsilon>0$ and $|t|>0$  sufficiently small,
there exist a choice of coordinates  $u'' \in B^{k-n}(\varepsilon)$
and corresponding  operators $A_{t,x,u''}(\h) \in \Psi_{cl}^{0,-\infty}(M)$
defined for all $(x,u'') \in \mathcal U \times B^{k-n}(\varepsilon),$ such that 
\begin{equation}\label{eq:variance2}
\int_{B^{k}(\varepsilon)} \left|\varphi_{h,t}^{(u)}(x)\right|^{2} d \nu(u)
= c_k(\varepsilon) \int_{B^{k-n}(\varepsilon)}
	\left \langle  A_{t,x,u''}(\h) \varphi_\hbar,
	\varphi_\hbar  \Big. \right \rangle_{L^2(M)}  du''+ \mathcal O(\h^\infty).
\end{equation}
\end{proposition}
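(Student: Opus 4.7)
The plan is to reduce the left-hand side of \eqref{eq:variance2} to a $u''$-family of $\h$-pseudodifferential quadratic forms by stationary phase in the $(u',\xi)$ variables. Using \eqref{approx of varphi} one has $|\varphi_{h,t}^{(u)}(x)|^2 = |W_{t,u}(\h)\varphi_\h(x)|^2 + \mathcal O(\h^\infty)$, and inserting the Schwartz kernel \eqref{kernel of W_{t,u}} of $W_{t,u}(\h)$ twice rewrites the left-hand side (up to $\mathcal O(\h^\infty)$) as
\[
\frac{1}{(2\pi\h)^{2n}} \int e^{\frac{i}{\h}\Phi_t(y,z,u,\xi,\eta;x)}\, a_t(u,y,\xi;x,\h)\,\overline{a_t(u,z,\eta;x,\h)}\, \chi^2(u)\, \varphi_\h(y)\overline{\varphi_\h(z)}\, du\, d\xi\, d\eta\, dy\, dz,
\]
with phase $\Phi_t = \phi_t(y,u,\xi;x) - \phi_t(z,u,\eta;x)$. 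After splitting $u = (u',u'') \in B^n(\varepsilon)\times B^{k-n}(\varepsilon)$, I will perform stationary phase in the $2n$ coupled variables $(u',\xi)$ for each fixed $u''$, treating $(y,z,\eta,x)$ as parameters.

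The Hamilton--Jacobi expansion \eqref{Taylor for S} gives
\[
\Phi_t = \langle x-y,\xi\rangle - \langle x-z,\eta\rangle - t\bigl[p_u(x,\xi) - p_u(x,\eta)\bigr] + \mathcal O(t^2),
\]
so the critical-point equations reduce, at leading order in $t$, to $y = x - t\,\partial_\xi p_u(x,\xi) + \mathcal O(t^2)$ and $\partial_{u'}p_u(x,\xi) = \partial_{u'}p_u(x,\eta)$. Under the admissibility condition \eqref{cond 1}, the associated Hessian is
\[
\Phi_t''\big|_{(u',\xi)} = -t\begin{pmatrix} 0 & d_{u'}d_\xi p_u(x,\xi) \\ d_\xi d_{u'} p_u(x,\xi) & d_\xi^2 p_u(x,\xi) \end{pmatrix} + \mathcal O(t^2),
\]
whose determinant equals $(-1)^n t^{2n}\bigl(\det d_{u'}d_\xi p_u(x,\xi)\bigr)^2 + \mathcal O(t^{2n+1})$, nonzero uniformly on the relevant compact ranges provided $|t|$ and $\varepsilon$ are sufficiently small. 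The Implicit Function Theorem then furnishes a unique smooth critical point $(u'_c,\xi_c)$ with $u'_c$ agreeing with the parametrization in \eqref{u'} at leading order and $\xi_c = \eta + \mathcal O(t)$; stationary phase applies.

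Computing the critical value yields $\Phi_{t,c} = \langle z-y,\eta\rangle + \mathcal O(t^2)$, where the $\mathcal O(t^2)$ correction depends smoothly on $(y,z,\eta,u'',x)$ and can be absorbed into the symbol. The resulting oscillatory integral in $(y,z,\eta)$ is then precisely the Schwartz kernel of an $\h$-pseudodifferential operator of class $\Psi_{cl}^{0,-\infty}(M)$, which I denote $A_{t,x,u''}(\h)$; its principal symbol is a nonzero multiple of
\[
|t|^{-n}\,\frac{|a_{t,0}(u'_c,u'',y,\eta;x)|^2\,\chi^2(u'_c,u'')}{|\det d_{u'}d_\xi p_{(u'_c,u'')}(x,\eta)|}
\]
evaluated at the critical point. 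This establishes
\[
\int_{B^{n}(\varepsilon)} \big|\varphi_{h,t}^{(u',u'')}(x)\big|^2\, \chi^2(u',u'')\, du' = \langle A_{t,x,u''}(\h)\varphi_\h,\varphi_\h\rangle_{L^2(M)} + \mathcal O(\h^\infty),
\]
and integrating over $u'' \in B^{k-n}(\varepsilon)$ and extracting the factor $c_k(\varepsilon)$ from \eqref{normalizing factor} gives \eqref{eq:variance2}.

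The main obstacle is ensuring uniform non-degeneracy of the Hessian and uniform stationary phase asymptotics across the compact ranges of $(y,z,\eta,u'',x)$ dictated by the support properties \eqref{amplitudes}. Condition \eqref{cond 1} provides non-degeneracy of $d_{u'}d_\xi p_u$ over $u \in B^k(\varepsilon)$ and $(x,\xi) \in p_0^{-1}(E-c\varepsilon, E+c\varepsilon)$, which is what forces $|t|$ and $\varepsilon$ to be chosen small depending on $(M,g_0)$ and the neighborhood $\mathcal U$ of $x_*$ from Remark \ref{neighborhood U}. Once this uniformity holds, the $\mathcal O(t^2)$ corrections to both phase and Hessian are standard to absorb into higher-order terms of the amplitude, and the resulting symbol lies in $S_{cl}^{0,-\infty}(T^*M)$ because the amplitudes of $W_{t,u}$ are compactly supported in momentum by \eqref{amplitudes}.
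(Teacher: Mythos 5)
Your approach is genuinely different from the paper's, though both rest on the same key ingredient. The paper introduces the operator $\hat W_{t,x,u''}(\h):C^\infty(M)\to C^\infty_0(B^n(\varepsilon))$ with Schwartz kernel $\chi(u)W_{t,u}(\h)(x,y)$, observes that $\chi(u)\varphi_{h,t}^{(u)}(x)=[\hat W_{t,x,u''}(\h)\varphi_\h](u')+\mathcal O(\h^\infty)$, and shows that admissibility condition \eqref{cond 1} makes $\hat W_{t,x,u''}(\h)$ an $\h$-FIO whose immersed Lagrangian $\Gamma_{t,x,u''}$ is a \emph{canonical graph} (this is precisely the non-degeneracy $\det d_{u'}d_\eta\phi_t\neq 0$ you also use). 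It then defines $A_{t,x,u''}(\h)=\hat W_{t,x,u''}(\h)^*\circ\hat W_{t,x,u''}(\h)$ and concludes $A_{t,x,u''}(\h)\in\Psi_{cl}^{0,-\infty}(M)$ by the composition calculus for FIOs associated to canonical graphs, with no stationary phase performed by hand. Your route unpacks that composition directly: the double kernel integral you write down is exactly $\langle\hat W^*\hat W\varphi_\h,\varphi_\h\rangle$, and the stationary phase in $(u',\xi)$ is the hands-on content of Hörmander's composition theorem. The Hessian structure, the determinant $(-1)^n t^{2n}(\det d_{u'}d_\xi p_u)^2+\mathcal O(t^{2n+1})$, and the $|t|^{-n}|\det d_{u'}d_\xi p_u|^{-1}$ factor in the resulting symbol all correctly reproduce the paper's \eqref{ppal symbol}. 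The paper's route is shorter and hides the computation behind FIO machinery; yours is more explicit and would produce the principal symbol \eqref{ppal symbol} directly as a by-product of the stationary phase, so it is arguably better suited to Theorem \ref{moments function}.

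One point deserves more than the hand-wave ``the $\mathcal O(t^2)$ correction can be absorbed into the symbol.'' To conclude that the post-stationary-phase oscillatory integral in $(y,z,\eta)$ is the Schwartz kernel of an operator in $\Psi_{cl}^{0,-\infty}(M)$, and not merely an FIO near the identity, you need the critical phase $\Phi_{t,c}(y,z,\eta;u'',x)$ to parametrize the conormal of the diagonal, i.e.\ $\Phi_{t,c}(y,y,\eta)=0$ and $d_\eta\Phi_{t,c}(y,y,\eta)=0$ \emph{exactly}, not just to leading order in $t$. This is in fact true, but the reason is structural rather than perturbative: when $y=z$, the pair $(\xi_c,u'_c)=(\eta,\,u'(y,\eta))$ with $d_\eta S(t,u'(y,\eta),u'',\eta;x)=y$ is a critical point (the $u'$-equation is trivially satisfied when $\xi=\eta$, and the non-degeneracy \eqref{non-degeneracy} pins down $u'$ via the $\xi$-equation), and at this point $\Phi_t=\phi_t(y,u,\eta;x)-\phi_t(y,u,\eta;x)=0$; the identity $d_\eta\Phi_{t,c}(y,y,\eta)=0$ then follows from $\partial_\eta\Phi_t|_c=-(d_\eta S-z)=0$. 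Once this is recorded, $\Phi_{t,c}(y,z,\eta)=\langle z-y,\eta+r(y,z,\eta)\rangle$ with $r=\mathcal O(t^2)$ by Taylor's theorem, the Kuranishi change of variable $\eta\mapsto\eta+r$ reduces the phase to $\langle z-y,\eta\rangle$, and the Jacobian is absorbed into the amplitude. That argument is precisely what the paper's appeal to ``$\Gamma_{t,x,u''}$ is a canonical graph, hence $\hat W^*\hat W$ is a $\Psi$DO'' buys for free, and you should make it explicit in your version.
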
\  \smallskip

\begin{proof}
By Remark \ref{neighborhood U}, we choose $\mathcal U\subset M$ to be an open neighborhood of $x_*$ so that the admissibility condition \eqref{cond 1} holds
on $\mathcal U$ . That is,  given the constant  $c>0$  in (\ref{constant})  and some subset of $n$ coordinates of $u$, which we denote  $u' \in B^n(\varepsilon)$, so that 
 the matrix 

\begin{equation}\label{admi.}
d_{u'} d_\xi ( p_u(x,\xi)) \quad \text{ is invertible for }\quad
\;  (x,\xi) \in p_0^{-1}(E-c\varepsilon, E+c\varepsilon), \, (x,u) \in \mathcal U \times  B^k(\varepsilon).
\end{equation} 
 We write $u''\in B^{k-n}(\varepsilon)$ for the omitted variables
 and assume that the coordinates of $u$ are ordered so that $u=(u',u'')$.\\

Write $W_{t,u}(\h)(x,y)$ for the Schwartz kernel of $W_{t,u}(\h).$ Then,
 for $u''\in B^{k-n}(\varepsilon)$ and  $x \in \mathcal U,$ we define a new family of operators
\begin{equation}\label{W_{x,E}}
	\hat W_{t,x,u''}(\h):C^{\infty}(M) \to C^\infty_0(B^n(\varepsilon)),
\end{equation}
with Schwartz kernels 
$$\hat W_{t,x,u''}(\h)(u',y):= \chi(u) \cdot W_{t,u}(\h)(x,y), \quad  \;\; u=(u',u'') \in B^k(\varepsilon),$$ 
where we continue to write $\chi$ for the cut-off function appearing in the definition of the
probability measure $\nu$ in (\ref{normalizing factor}).
 By \eqref{approx of varphi},
 $$\chi(u) \, \varphi_{h,t}^{(u)}(x) = \chi(u) \: [W_{t,u}(\h)\varphi_\h](x) + \mathcal O(\h^\infty)=[\hat W_{t,x,u''}(\h)\varphi_\h](u')+  \mathcal O(\h^\infty),$$
and so,
\begin{align}\label{key computation}
&\int_{B^{k}(\varepsilon)} \left|\varphi_{h,t}^{(u)}(x)\right|^{2} d \nu(u)
=   c_k(\varepsilon) \int_{B^{k}(\varepsilon)} \left| \hat W_{t,x,u''}(\h) \varphi_\hbar (u') \right|^2 du  + \mathcal O(\h^\infty) \notag \\
& \quad \qquad = c_k(\varepsilon)	\int_{B^{k-n}(\varepsilon)}
	\left \langle  \hat W_{t,x,u''}(\h) \varphi_\hbar,
	 \hat W_{t,x,u''}(\h) \varphi_\hbar \Big. \right \rangle_{L^2(B^{n} (\varepsilon))}  du''   + \mathcal O(\h^\infty).  \notag\\
\end{align}

From \eqref{kernel of W_{t,u}}, the Schwartz kernel of $ \hat W_{t,x,u''}(\h)$ is given by
	\begin{equation}\label{kernel of W_{x,E}}
	\hat W_{t,x,u''}(\h)(u',y)= \frac{1}{(2\pi h)^n} \int_{\R^n} e^{\frac{i}{\h} \phi_t(y,u',u'', \xi; x) } a_t(u,y,\xi;x,\h)  \chi(u)\; d\xi + K_x(y,u),
	\end{equation}
where $|\partial^\alpha_x \partial^\beta_y K_x(y,u)|= \mathcal O_{\alpha,\beta} (\h^\infty)$ uniformly in
$(x,y,u) \in U \times V \times B^k(\varepsilon)$ for $\varepsilon>0$ small, where $U,V \subset \R^n$ are
local coordinate charts with $U \subset \mathcal U.$ 
The amplitude $ a_t(u,y,\xi;x,\h) \sim \sum_{j=0}^\infty a_j(u,y,\xi;x)\h^j$ with
$a_j(u, \cdot ,\cdot \,; \cdot) \in
 C^\infty( B^k(\varepsilon), C_0^\infty(V \times \R^n \times U)).$ Moreover, we recall from (\ref{amplitudes}) that $\text{supp} \;(a_t(u, \cdot, \cdot \,; \cdot , \h))  \subset
 \{(y,\xi,x) \in T^*U \times V: (x,\xi)\in p_0^{-1}(E-c\varepsilon, E+c\varepsilon), \, y = d_{\xi}S(t,u,\xi;x) = x + {\mathcal O}_{u}(t) \}.$\\

By the same argument presented in \cite[Prop. 4.1]{ET}, it can be shown that
for $x \in U,$ $\varepsilon>0$ and $|t| \neq 0$ small enough,
$ \hat W_{t,x,u''}(\h) \in I^{0,-\infty}_{cl} (M \times B^n(\varepsilon); \Gamma_{t,x,u''})$
with 
\begin{align}\label{Lagrangian}
	 \Gamma_{t,x,u''}&:= \{(u',d_{u'}S(t, u, \eta; x)), d_\eta S(t, u, \eta; x), \eta): \;
	 (d_\eta S(t, u, \eta; x),\eta) \in supp \,\chi_E^{(0)} \} \notag \\
	 &\hspace{8cm} \subset T^*B^n(\varepsilon) \times T^*M.
	 \end{align}
where $u:=(u',u'')\in B^k(\varepsilon)$ for $\varepsilon$ small, and $x\in U$.  It remains to show that $\Gamma_{t,x,u''}$ is a canonical graph.\\
We recall   that if  $(d_\eta S(t,u,\eta;x),\eta) \in supp\, \chi_E^{(0)}$ for $u \in B^k(\varepsilon)$ then one gets that 
$(x,d_xS(t,u,\eta;x)) \in \text{supp} \chi_E^{(u)} \subset  p_0^{-1}((E-(c+1)\varepsilon, E+(c+1) \varepsilon))$ with $c>0$ as in \eqref{constant}. 
By \eqref{Lagrangian} and the admissibility assumption (B) it follows that by possibly shrinking $t\neq0$, we can ensure that  there is a constant $C_0>0,$ such that  the
non-degeneracy condition
 \begin{equation}\label{non-degeneracy}
  \det(d_{u'}d_\eta \phi_t (y,u',u'',\eta;x)) =|t|^n \left(\det(d_{u'}d_\eta p_u(x,\eta)) + \mathcal O (t ^2)\right) \geq C_0 |t|^n,
  \end{equation}
 holds uniformly for $(u,y,\eta)$ with $(y,\eta) \in \text{supp} \chi_E^{(0}$ and $y=d_\eta S(t,u,\eta;x).$
Now, for $u''$ fixed and $x \in U$, consider the map
$$(u',y, \eta) \mapsto d_\eta \phi_t (y,u',u'',\eta;x), \quad \quad  (u', \tau; y,\eta) \in \Gamma_{t,x,u''} .$$

\noindent We claim that due to the the non-degeneracy condition  \eqref{non-degeneracy},
the Lagrangian \eqref{Lagrangian} is a canonical graph. Indeed,  \eqref{non-degeneracy} allows us to apply the
Implicit Function Theorem and locally write $u'=u'(y,\eta)$ satisfying
\begin{equation}\label{u'}
u'=u'(y,\eta) \quad \text{when}\quad d_\eta \phi_t (y,u',u'',\eta;x)=0,
\end{equation}
for $x \in U$.
\
Then,  taking into account that for $x \in U$
$$d_\eta \phi_t (y,u',u'',\eta;x)=0 \quad \text{when}\quad  y=d_\eta S(t,u',u'',\eta;x),$$
we write  $(y,\eta)\in V \times \R^n$ as  local parametrizing variables for $\Gamma_{t,x,u''}$ as in  \eqref{Lagrangian}  and  get: 
\begin{align} \label{u'=u'(y,xi)}
\Gamma_{t,x,u''}=& \left\{\left (u' (y,\eta), d_{u'} S \left(t,u'(y,\eta),u'',\eta; x \big. \right)\;;\; y, \eta \Big.\right): \right.\\
&\hspace{4cm}\left. (y,\eta)\in supp \,\chi_E^{(0)}, \,\, y=d_\eta S(t,u',u'',\eta;x)  \Big.\right\}.
\end{align} 
	
For $u'' \in B^{k-n}(\varepsilon)$ and $x \in U$ define the operators
$$A_{t,x,u''}(\h): C^\infty(M) \to C^\infty(M),$$
\begin{equation}\label{A_{x,E}}
A_{t,x,u''}(\h):=\left( \hat W_{t,x,u''}(\h) \Big.\right)^* \circ (\hat W_{t,x,u''}(\h) ).
\end{equation}

Since $ \hat W_{t,x,u''}(\h) \in I^{0,-\infty}_{cl} (M \times B^n(\varepsilon); \Gamma_{t,x,u''})$
and the immersed Lagrangian $\Gamma_{t,x,u''}$ is a canonical graph,
the operator
		$$A_{t,x,u''}(\h) \in \Psi_{cl}^{0,-\infty}(M),$$
for $x \in U$ and $u'' \in B^{k-n}(\varepsilon)$.
From \eqref{key computation} and \eqref{A_{x,E}} it follows that
$$\int_{B^{k}(\varepsilon)} \left|\varphi_{h,t}^{(u)}(x)\right|^{2} d \nu(u)
=c_k(\varepsilon)	 \int_{B^{k-n}(\varepsilon)}
	\left \langle  A_{t,x,u''}(\h) \varphi_\hbar,
	\varphi_\hbar  \Big. \right \rangle_{L^2(M)}  du''   + \mathcal O(\h^\infty).$$

\end{proof}


\subsection{Proof of Theorem \ref{odd moments and variance}}
\
Since $M$ is compact we choose a finite covering
$$M \subset \bigcup_{j=1}^N \mathcal V_{x_j}$$
where $x_j \in M$ and $\mathcal V_{x_j}=\mathcal W_{x_j} \cap \mathcal U_{x_j}$.
Here $\mathcal W_{x_j}$ is given by part \eqref{cond 2} of the admissibility condition at $x_j$,
and $\mathcal U_{x_j}$ is given in  Remark \ref{neighborhood U}.\\

Fix $j \in \{1, \dots, N\}$ and let $x\in \mathcal V_{x_j}$.
To prove the first part of Theorem \ref{odd moments and variance} we note that
Proposition \ref{A_x} gives $A_{t,x,u''}(\h) \in \Psi_{cl}^{0,-\infty}(M).$ Thus,   by $L^2$ boundedness there
exists a constant $C_j=C_j(\varepsilon, t, E, g_0)>0$ such that
$$\langle  A_{t,x,u''}(\h) \varphi_\hbar, \varphi_\hbar  \rangle_{L^2(M)} \leq C_j$$
uniformly in $(x,u'', \h) \in \mathcal V_{x_j}\times B^{k-n}(\varepsilon) \times (0,\h_0]$.
  Therefore, from \eqref{eq:variance} and \eqref{eq:variance2} one can choose a positive constant $C >0$
   so that the first part of the statement of
   Theorem \ref{odd moments and variance} holds uniformly for $x \in K,$ where $K  \subset (V^{-1}(E))^c$ is any compact subset.\\

To prove the second part of Theorem \ref{odd moments and variance}
 regarding the odd moments we simply apply Proposition \ref{negligible odd moments} in each neighborhood $\mathcal V_{x_j}$.
{\flushright \qed}


 \subsection{Proof of Theorem \ref{moments function}}
 From Proposition \ref{A_x}  and   equation \eqref{eq:variance},
\begin{align}\label{eq 1}
\lim_{\h\to 0^+} Var \left[ \Re \left(\varphi_{h,t}^{(\cdot)}(x)\right)\right]
&=\lim_{\h\to 0^+}
 \int_{B^{k}(\varepsilon)} \left|\varphi_{h,t}^{(u)}(x)\right|^{2} d\nu(u) \notag \\
& =\lim_{\h\to 0^+}
	c_k(\varepsilon) \int_{B^{k-n}(\varepsilon)}
	\left \langle  A_{t,x,u''}(\h) \varphi_\hbar, \varphi_\hbar  \Big. \right \rangle_{L^2(M)}  du''. \notag \\
\end{align}

Since $(\varphi_\h)$ is a quantum ergodic sequence,
\begin{equation}\label{eq 2}
\lim_{\h \to 0^+}\left \langle  A_{t,x,u''}(\h) \varphi_\hbar,
	\varphi_\hbar  \Big. \right \rangle_{L^2(M)}
	=\frac{1}{|p_0^{-1}(E)|} \int_{p_0^{-1}(E)} \sigma_0( A_{t,x,u''}(\h)) (y,\eta) \,d\omega_E(y,\eta).
\end{equation}

In addition, following the same argument presented in Corollary 4.2 of \cite{ET}, the 
principal symbol can be locally written as
		\begin{align}\label{ppal symbol}
		\sigma_0\left(A_{t,x,u''}(\h) \big.\right)  (y,\eta)
		&=|\chi^{(u',u'')}_E(x,\eta)|^2 \frac{|\det(d_x \pi G_{(u',u'')}^{-t}(x,\eta))|}{|\det(d_{u'} d_\eta S(t, x, \eta; u',u''))|}
		\; \chi^2(u',u'') \notag \\
		&=|\chi^{(u',u'')}_E(x,\eta)|^2 \frac{|\det(d_x \pi G_{(u',u'')}^{-t}(x,\eta))| }{\,| t|^n\, |\det(d_{u'} d_\eta \,p_{(u',u'')}( x, \eta))|}\;
		 (1+ {\mathcal O}(t)) \, \chi^2(u',u'') \notag \\
		\end{align}		
for  $u'=u'(y,\eta)$ parametrizing the Lagrangian $\Gamma_{t,x,u''}$ regarded as a canonical graph.
The first statement of Theorem \ref{moments function} then follows by combining \eqref{eq 1}, \eqref{eq 2} and the expression for the
principal symbol \eqref{ppal symbol}.\\

The second statement of Theorem \ref{moments function} about odd moments is a direct application of  the second part of Theorem \ref{odd moments and variance}.
{\flushright \qed}



\section{Admissible perturbations}\label{section: admissible perturbations}

In this section we study the geometry behind the admissibility condition and show that
perturbations satisfying such conditions always exist. It is clear that one can always
have perturbations satisfying part \eqref{cond 2} of the admissibility condition.
We therefore focus on proving the existence of metric perturbations satisfying condition \eqref{cond 1}.
The symbol $p_{u}:T^*M \to T^*M$ defined in \eqref{p_u} has the form
$$p_{u}(x,\xi)=\sum_{i,j=1}^n g_u^{ij}(x)\xi_i \xi_j +V(x) .$$

Write $\mathcal M$ for the space of Riemannian metrics on $M$.  For each coordinate $u_s$ of $u$ define the symmetric tensor
 $h_{u_s}:=\delta_{u_s}g_u^{-1}$ and write in local coordinates
	\begin{equation}\label{def: h}
	h_{u_s}= h_{u_s}^{ij}\, dx_i \otimes dx_j, \quad \quad h_{u_s}^{ij}:= \delta_{u_s}g_u^{ij}.
	\end{equation}
It is straight forward to check that
$\partial_{u_s} \partial_{\xi_i} p_u(x,\xi) \big|_{u=0}=2\, \sum_{l=1}^n h_{u_s}^{l i}(x)\,\xi_l.$
Thereby, a metric perturbation satisfies condition \eqref{cond 1}  provided  there exist $c>0$ and
 an $n$-tuple $u'=(u_1, \dots, u_n)$ of coordinates of $u$
		so that  for all  $(x,\xi) \in p_0^{-1}(E-c\varepsilon, E+c\varepsilon)$,  the matrix
		$\left (\sum_{l=1}^n h_{u_j}^{l i}(x)\,\xi_l\right)_{i,j=1,\dots,n}$
is invertible.
By definition, the notion of admissibility depends on the direction, inside the space of symmetric tensors,
in which $g_0$ is deformed.
In what follows we show that the admissibility condition is directly related to performing the deformation $g_u$
 in sufficiently many {\em volume preserving} directions, described below.

Let $\mathcal P$ denote the multiplicative group of positive smooth functions on $M$, which we refer to
as \emph{pointwise conformal deformations}. $\mathcal P$  acts on $\mathcal M$ by multiplication
$$ \mathcal P \times \mathcal M \to \mathcal M, \quad \quad (p,g) \to pg.$$
Given  $g_0 \in \mathcal M$,  the orbit of $g_0$ under $\mathcal P$ denoted by $\mathcal P \cdot g_0$,
is a closed submanifold of $\mathcal M$ with tangent space at $g_0$ given by
\begin{equation}\label{conf def}
T_{g_0} ( \mathcal P \cdot g_0)= \{ v \in S^2(M) : \;\; v=f\,g_0,\; f \in C^\infty(M,\mathbb R) \}.
\end{equation}

Let $\mu$ be a volume form on $M$ and define $\mathcal N_{\mu}:= \{ g \in \mathcal M : \mu=\mu_g\}$ 
where $\mu_g$ denotes the Riemannian volume measure associated to $g$.  Pointwise conformal 
transformations $g_0 \mapsto f g_0$ multiply the volume form $\mu_{g_0}(x)$ at a point $x$ by $(f(x))^{n/2}$.  Transverse 
to the orbit of $g_0$ by the action of pointwise conformal transformations is the sub manifold 
${\mathcal N}_{\mu_{g_0}}$ of all metrics $g$ on $M$ with the  fixed volume form;
equivalently, the determinant $\det(g_{ij}(x))$ is preserved for all $x$.  It is well-known 
that the tangent space to the space of symmetric matrices with fixed determinant consists of symmetric 
 traceless matrices.  Accordingly, it can be shown (cf. \cite{Ebi}) that the tangent space 
$T_{g_0}{\mathcal N}_{\mu_{g_0}}$ is given by
	\begin{equation}\label{vol pres}
	T_{g_0} ( \mathcal N _{\mu_{g_0}} )= \{ v \in S^2(M) : \;\; (tr_{g_0} v)(x)=0,\forall x\in M\}.
	\end{equation}
For every metric $g_0 \in \mathcal M$ the space of symmetric tensors has the pointwise orthogonal splitting
$$T_{g_0} \mathcal M =T_{g_0} ( \mathcal N _{\mu_{g_0}} ) \oplus T_{g_0} ( \mathcal P \cdot g_0)$$
where every $v \in S^2(M)$ is decomposed as $v= (v- \frac{tr_{g_0}v}{n} g_0) + \frac{1}{n} (tr_{g_0}v)\, g_0$.\\

Let $g_u$ be a metric deformation of $g_0$; we continue to write $h_{u_s}= \delta_{u_s} g_u^{-1}$. 
Working in geodesic normal coordinates at $x_*$, it is not difficult to show that
volume-preserving deformations are characterized by the condition $tr_{g_0^{-1}}(h_{u_s} (x))=0$ 
for all $s$.
We shall show below that the admissibility condition holds for such deformations.

\subsection{Surfaces}
On surfaces, we claim that perturbations $g_u$  that have two linearly independent
$u$-derivatives in the volume preserving directions are admissible.
\begin{proposition}
Let $(M,g_0)$ be a compact Riemannian surface. Let $E$ be a regular value of $p_0$.
Suppose $g_u$ with $u \in B^k(\varepsilon)$  is a  perturbation of  $g_0$ such that  there exist  
two coordinates $u'=(u_1, u_2)$ of $u$ for which
$h_{u_1}(x)$ and  $h_{u_2}(x)$ (as defined in \eqref{def: h}) are linearly independent tensors with 
$tr_{g_0^{-1}}(h_{u_1})(x)=tr_{g_0^{-1}}(h_{u_2})(x)=0$
at some $x \notin V^{-1}(E)$. 
Then, for $\varepsilon$ small enough, the perturbation $g_u$ satisfies part  \eqref{cond 1} of the admissibility condition at $x$.
 \end{proposition}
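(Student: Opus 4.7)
My plan is to verify condition \eqref{cond 1} at $x$ by a pointwise computation at $u = 0$ and then extend by continuity. Using $p_u(x,\xi)=\sum g_u^{ij}(x)\xi_i\xi_j+V(x)$, one has
\[
\partial_{u_s}\partial_{\xi_i}p_u(x,\xi)\big|_{u=0}\;=\;2\sum_{j=1}^{2}h_{u_s}^{ij}(x)\,\xi_j,
\]
so, identifying $h_{u_s}(x)$ with a symmetric $2\times 2$ matrix, the matrix $d_{u'}d_\xi p_u(x,\xi)|_{u=0}$ has (up to a factor $2$) columns $h_{u_1}(x)\xi$ and $h_{u_2}(x)\xi$. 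It is therefore enough to show this $2\times 2$ matrix has nonzero determinant for $\xi$ on the energy shell over $x$.

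To compute it I would work in geodesic normal coordinates at $x$, so that $g_0^{ij}(x)=\delta^{ij}$. The trace-free condition $\mathrm{tr}_{g_0^{-1}}h_{u_s}(x)=0$ then reads $h_{u_s}^{11}(x)+h_{u_s}^{22}(x)=0$, which forces
\[
h_{u_1}(x)=\begin{pmatrix} a & b \\ b & -a\end{pmatrix},\qquad h_{u_2}(x)=\begin{pmatrix} c & d \\ d & -c\end{pmatrix},
\]
and the linear independence of $h_{u_1}(x)$ and $h_{u_2}(x)$ translates into $ad-bc\neq 0$. A short direct expansion then yields
\[
\det\bigl(h_{u_s}(x)\xi\bigr)_{i,s=1,2}\;=\;(ad-bc)(\xi_1^2+\xi_2^2)\;=\;(ad-bc)\,|\xi|^2_{g_0(x)} .
\]

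On $p_0^{-1}(E)$ one has $|\xi|^2_{g_0(x)}=E-V(x)\neq 0$ because $x\notin V^{-1}(E)$; combined with $ad-bc\neq 0$ this yields nonvanishing of the determinant at $u=0$ for every $\xi$ in the energy shell over $x$. Since $d_{u'}d_\xi p_u(x,\xi)$ is smooth in $(u,\xi)$ and $\{\xi:(x,\xi)\in p_0^{-1}([E-c\varepsilon,E+c\varepsilon])\}$ is a compact neighborhood of that shell in $T_x^*M$, a continuity and compactness argument gives invertibility uniformly for $u\in B^k(\varepsilon)$ as long as $\varepsilon>0$ is taken small enough. I do not foresee any real obstacle here; the structural reason the argument succeeds is that on a surface, two linearly independent volume-preserving directions already exhaust the $2$-dimensional space of traceless symmetric $2\times 2$ matrices, which is exactly what forces the determinant to factor as a nonzero scalar multiple of $|\xi|^2_{g_0(x)}$.
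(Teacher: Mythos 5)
Your proof is correct and follows essentially the same route as the paper: both express the mixed Hessian in geodesic normal coordinates, use the traceless condition to write $h_{u_1}(x)$ and $h_{u_2}(x)$ as $2\times 2$ traceless symmetric matrices, compute that the determinant factors as (linear-independence constant)$\cdot|\xi|^2_{g_0(x)}$, and conclude via $V(x)\neq E$; your variant centers the normal coordinates at $x$ itself, which cleanly eliminates the $\mathcal{O}(|x|^2)$ remainders the paper carries along by centering at a nearby point $x_*$, but this is a cosmetic simplification rather than a different argument.
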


\begin{proof}
Let $x_*\in M$ be
such that $x$ belongs to a geodesic ball cantered at $x_*$, and consider
normal coordinates at $x_*$. In these coordinates,
$g_{0_{ij}}(x)=\delta_{ij}+\mathcal O (|x|^2)$ for $x$ being at a small distance $|x|$
from $x_*$. Therefore, since  $tr_{g_0^{-1}}(h_{u_s})(x)=0$ for $s=1,2$, we have $h_{u_s}^{11}(x)=-h_{u_s}^{22}(x)+ \mathcal O (|x|^2)$ for $s=1,2$.
It is straight forward to check

$$\det \left(\sum_{j=1}^2 h_{u_s}^{ij}(x)\,\xi_j \right)_{s,i=1,2}
	= |\xi|_{g_0(x)}^2 \left(  \det  \begin{pmatrix}
						h_{u_1}^{11}(x)  & h_{u_2}^{11}(x) \\
						h_{u_1}^{12}(x)  & h_{u_2}^{12}(x)
						\end{pmatrix}
	+  \mathcal O (|x|^2) \right).$$
	
Since we are only interested in what happens when  $|\xi|^2_{g_0(x)}+V(x)=E+\mathcal O(\varepsilon)$,
the result follows from the assumption
$V(x)\neq E$ and the fact that $h_{u_1}(x)$ and  $h_{u_2}(x)$ are linearly independent tensors.
\end{proof}

\subsection{Manifolds}

In what follows we show that on an $n$-dimensional manifold we can always have
admissible perturbations.\\

Let $M$ be an $n$-dimensional compact manifold and fix $x_* \in M$.
Consider a geodesic normal coordinate system at $x_*$.
 We shall consider deformations of the reference metric $g_0$ that, as in the surface case,
preserve the volume form. Infinitesimally, as explained in \eqref{vol pres}, the
corresponding quadratic
form is given by a traceless symmetric matrix. The space of traceless symmetric tensors at $x \in M)$
has dimension
\begin{equation}\label{dimension of traceless}
\kappa_n:= \frac{n^2+n-2}{2},
\end{equation}
and the basis of the space of such forms is given by
$$
\xi_1^2-\xi_i^2 , \;\; 2\leq i\leq n; \quad \text{and} \quad  \xi_j\xi_k, \;\;
1\leq j<k\leq n,
$$
for $\xi=(\xi_1, \dots, \xi_n) \in T^*_xM$; we denote these polynomials evaluated at $x=x_*$ by 
$q_j(\xi)$ for $j=1, \dots, \kappa_n$.\\

Note that since we are using normal coordinates centred at $x_*$ then $|\xi|^2_{g_0(x_*)}= \sum_i \xi_i^2$,  
and remark that the polynomials $q_j(\xi)$  form a basis in  
the space of spherical harmonics of degree two on the sphere $S^{n-1}_{x_*}=\{\xi \in T^*_{x_*}M:  |\xi|_{g_0(x_*)}=1\}$ with the round metric $g_{S^{n-1}_{x_*}}$.
In the proof we shall use a computation showing that the round metric on $S^{n-1}_{x_*}$ is an extremal 
for the second eigenvalue of the Laplacian in the space of nearby Riemannian metrics.  We refer 
to \cite{EI,Nad,Tak} and references therein for a description of general theory of such metrics.  

Below we summarize several well-known facts about extremal metrics.  Let $g_0$ be an extremal metric on a 
compact $d$-dimensional manifold $N$.  Then 
\begin{itemize}
\item[a)] If $(N,g_0)$ is a homogeneous space (e.g. a round $S^{n-1}$), then the metric $g_0$ 
is extremal for {\em all} eigenvalues of the Laplacian $\Delta_{g_0}$.  
\item[b)]  If $g_0$ is extremal for an eigenvalue $\lambda$ (of multiplicity $m$), and 
$\varphi_1,\ldots,\varphi_m$ form an orthonormal basis of the corresponding eigenspace $E_\lambda$, 
then 
\begin{equation}\label{critical:metric}
\sum_{j=1}^m d\varphi_j\otimes d\varphi_j=(\lambda/2)g_0.  
\end{equation}
\end{itemize}
We remark that one can show that $m>d$ in \eqref{critical:metric}.

\begin{claim}\label{full:rank}
Assume that $\{\varphi_1,\ldots,\varphi_m\}$ satisfy \eqref{critical:metric}.  Then the (jacobian) matrix 
$$\frac{\partial(\varphi_1,\ldots,\varphi_m)}{\partial(x_1,\ldots,x_d)}$$ has the maximal possible rank $d$.  
\end{claim}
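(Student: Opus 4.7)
The plan is to reinterpret the tensor identity \eqref{critical:metric} as a pointwise matrix identity and then read off the rank directly from the positive definiteness of $g_0$.

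I would first fix a point $x\in N$, pick local coordinates $(x_1,\dots,x_d)$ near $x$, and denote by $J$ the $m\times d$ Jacobian matrix with entries $J_{ja}:=\partial\varphi_j/\partial x_a$. Contracting both sides of
$$\sum_{j=1}^m d\varphi_j\otimes d\varphi_j \;=\; \tfrac{\lambda}{2}\, g_0$$
with the coordinate fields $\partial_a,\partial_b$ yields the pointwise identity
$$(J^{T}J)_{ab} \;=\; \sum_{j=1}^m \frac{\partial\varphi_j}{\partial x_a}\frac{\partial\varphi_j}{\partial x_b} \;=\; \frac{\lambda}{2}\, g_{0,ab}(x),$$
so $J^{T}J=(\lambda/2)\,g_0(x)$ as $d\times d$ matrices.

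Next I would invoke that $(g_{0,ab}(x))$ is positive definite because $g_0$ is a Riemannian metric, and that $\lambda>0$ because $\lambda$ is a nonzero eigenvalue of $\Delta_{g_0}$ on a compact manifold (the only eigenvalue equal to zero corresponds to constant functions, which cannot give rise to a nontrivial identity of the form \eqref{critical:metric}). Therefore $J^{T}J$ is positive definite, hence of rank $d$. Using the standard linear-algebra fact that $\operatorname{rank}(J)=\operatorname{rank}(J^{T}J)$, this forces $\operatorname{rank}(J)=d$, which is maximal since $J$ has exactly $d$ columns.

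The ``hard part'' is really just making sure the setup is right: recognizing that \eqref{critical:metric} is exactly the Gram-matrix identity for the columns of $J^{T}$, after which the rank claim is automatic. There is no genuine obstacle here aside from verifying $\lambda>0$, which is standard. Consequently the claim requires no information about the geometry of $N$ beyond the tensor identity itself and the positive definiteness of $g_0$.
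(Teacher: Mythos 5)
Your proof is correct and follows essentially the same approach as the paper: both read the identity $\sum_j d\varphi_j\otimes d\varphi_j = (\lambda/2)g_0$ as saying the Gram matrix of the Jacobian is a positive-definite $d\times d$ matrix and conclude full rank. You spell out the $J^{T}J$ computation and the $\lambda>0$ point explicitly, whereas the paper phrases the same observation as a one-line contradiction, but the underlying argument is identical.
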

\begin{proof}
Assume that the rank is less than $d$.  Than the quadratic form $\sum_{j=1}^m d\varphi_j\otimes d\varphi_j$ 
cannot be positive-definite; however by assumption it is proportional to the positive-definite Riemannian  
metric $g_0$ on $N$.  Contradiction finishes the proof. 
\end{proof}
The existence of admissible perturbations will follow from Claim \ref{full:rank}.  

We shall show that a metric perturbation is admissible if it is of the form 
\begin{equation}\label{admissible:taylor}
g_u^{-1}(x)=g_0^{-1}(x) + \sum_{j=1}^{\kappa_n} u_j\, h_j(x),
\end{equation}
where $h_j(x)(\xi,\xi)$ are homogeneous of degree $2$ in the $\xi$ variables and 
are required to satisfy the following conditions: 
\begin{itemize}
\item[(a)] $h_j(x_*)(\xi,\xi)=q_j(\xi),\;1\leq j\leq\kappa_n$; 
\item[(b)] $h_j$ are $C^2$ tensors.  
\end{itemize}


\begin{proposition}\label{local:sufficient}
Let a perturbation of  $g_u$ of $g_0$ have the form \eqref{admissible:taylor} with $h_j$ satisfying conditions 
(a) and (b).  Then there exist $\varepsilon>0$ and $\delta>0$ such that 
for all $u \in B^{n}(\varepsilon)$ the perturbation $g_u$ 
satisfies part \eqref{cond 1} of the admissibility condition at $x$ for $x\in B(x_*,\delta)$.
\end{proposition}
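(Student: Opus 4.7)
My plan is to verify condition \eqref{cond 1} at $x=x_*$ by reducing the invertibility of $d_{u'}d_\xi p_u(x,\xi)$ to a full-rank property of the Jacobian of the degree-two harmonic polynomials $q_s$, applying Claim \ref{full:rank}, and then propagating the conclusion to nearby $x$ by continuity.

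In geodesic normal coordinates centered at $x_*$, a direct computation gives
$$\partial_{u_s}\partial_{\xi_i} p_u(x,\xi)\big|_{u=0}=2\sum_{j=1}^n h_s^{ij}(x)\,\xi_j,$$
so by assumption (a) this evaluates at $x=x_*$ to $\partial_{\xi_i} q_s(\xi)$. Hence verifying condition \eqref{cond 1} at $(u,x)=(0,x_*)$ is equivalent to producing an $n$-tuple of indices $s_1,\ldots,s_n$ for which the corresponding $n\times n$ submatrix of the full $\kappa_n\times n$ Jacobian $J(\xi):=(\partial_{\xi_i} q_s(\xi))$ is invertible on the relevant cospherical shell.

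The key step is to show $J(\xi)$ has maximal column rank $n$ at every $\xi\neq 0$. Restricted to the unit sphere $S^{n-1}_{x_*}$, the $q_s$ form a basis of the second eigenspace of the round Laplacian (eigenvalue $2n$, multiplicity $\kappa_n$). Since $S^{n-1}_{x_*}$ is homogeneous, fact (a) implies the round metric is extremal for this eigenvalue, and fact (b) yields the critical-metric identity $\sum_s dq_s\otimes dq_s=n\,g_{S^{n-1}}$. Claim \ref{full:rank} applied to $N=S^{n-1}_{x_*}$ then gives that the tangential Jacobian of the $q_s$ has maximal rank $n-1$ everywhere on $S^{n-1}$. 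Combining this with Euler's identity $\xi\cdot\nabla q_s(\xi)=2q_s(\xi)$ and the observation that the $q_s$ cannot all vanish simultaneously at any $\xi\neq 0$ (since they span a nonzero space of functions on the sphere), one deduces that the full Jacobian $J(\xi)$ has rank $n$ on $\mathbb{R}^n\setminus\{0\}$.

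Given pointwise full rank, I would fix $\xi_0$ with $|\xi_0|^2=E-V(x_*)$, select an $n$-tuple of rows whose $n\times n$ minor of $J(\xi_0)$ is nonzero, and then use continuity of the entries in $(x,\xi,u)$, together with the $C^2$ regularity from condition (b), to conclude that the same minor is bounded away from zero for $(x,\xi,u)$ in a small neighborhood of $(x_*,\xi_0,0)$. Shrinking $\varepsilon,\delta$ then yields condition \eqref{cond 1} locally. The main obstacle to upgrading this to the uniform statement of condition \eqref{cond 1} is that a \emph{single} choice of $n$-tuple need not give a nonzero minor over the full energy-sphere at $x_*$: any $n\times n$ subdeterminant of $J$ is a homogeneous polynomial of odd degree $n$ in $\xi$, and for $n\geq 3$ such a polynomial necessarily vanishes somewhere on $S^{n-1}$. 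Overcoming this requires a compactness argument, covering the sphere by finitely many angular patches on each of which a locally chosen $n$-tuple works—the pointwise rank-$n$ conclusion from Claim \ref{full:rank} being the essential algebraic input.
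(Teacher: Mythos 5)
Your argument for the pointwise full rank is more careful than the paper's own proof. The paper invokes Claim~\ref{full:rank} to deduce only that the tangential differentials $d_\xi q_j(\xi)$ span $T_\xi^*(S^{n-1}_{x_*})$, which is $(n-1)$-dimensional, and the last line of the proof even asserts that the rank equals $n-1$; that cannot yield invertibility of an $n\times n$ mixed Hessian, which is what condition~\eqref{cond 1} demands. You supply the missing radial step: by Euler's identity $\xi\cdot\nabla_\xi q_s(\xi)=2q_s(\xi)$, and since the $q_s$ span the degree-two spherical harmonics they have no common zero for $\xi\neq 0$, so the ambient span is genuinely $n$-dimensional. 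Up to this point your argument corrects and completes the paper's.

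The obstacle you raise at the end, however, is not removable by the patching you suggest, and it is a genuine gap. Condition~\eqref{cond 1} asks for one fixed $n$-tuple $u'$ that works for \emph{every} $\xi$ on the energy annulus, and the quantity $|\det(d_{u'}d_\xi p_u)|$ sits in the denominator of \eqref{beta}, so it must be bounded away from zero on the whole cosphere; a different $n$-tuple on each angular patch produces no such single $u'$. Your parity statement also needs a correction: every $n\times n$ minor of $J(\xi)$ is a homogeneous polynomial of degree $n$ (each entry $\partial_{\xi_i}q_s$ is linear in $\xi$), and this forces a zero on $S^{n-1}$ only when $n$ is \emph{odd}, when the minor is an odd function; for even $n$ the minor can be sign-definite, as the $n=2$ computation in the Surfaces subsection shows. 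For odd $n\geq 3$ the obstruction is real: e.g.\ with $q_1=\xi_1^2-\xi_2^2$, $q_2=\xi_1^2-\xi_3^2$, $q_3=\xi_1\xi_2$ the $3\times3$ minor equals $4\xi_3(\xi_1^2+\xi_2^2)$, vanishing on the equator $\xi_3=0$, and every other $3\times3$ minor is likewise odd and must vanish somewhere. The paper's proof does not confront the choice of $n$-tuple at all, so you have located a real gap in Proposition~\ref{local:sufficient} for odd $n$; closing it would appear to require either relaxing condition~\eqref{cond 1} so that the $n$-tuple may depend on $\xi$, or restricting the microlocal support to a conic patch on which a single $n$-tuple has a nonvanishing minor.
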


\begin{proof}
   Clearly,
$\partial _{u_j} (|\xi|_{g_u(x_*)})=q_j(\xi)$,
and hence the $j$-th column (say) of the mixed hessian matrix $d_\xi d_u p_u(x_*,\xi) $
corresponds to the
gradient $d_\xi q_j(\xi)$.

Now, since the sphere $S^{n-1}_{x_*}$ is a homogeneous space, the round metric $g_{S^{n-1}_{x_*}}$
is a critical metric for the corresponding eigenvalue functional
$g \mapsto \lambda(g)\cdot {\rm Vol}(g)^{2/n}$, where
$\lambda$ denotes the second positive eigenvalue (without multiplicity)
of the Laplacian.

By \eqref{critical:metric} (\cite{EI,Nad,Tak}), the $L^2$-normalized basis of the eigenspace $E(\lambda)$
(which can be chosen as $\{q_1(\xi),\ldots,q_{\kappa_n}(\xi)\}$ in our case)
satisfies
$$ 
\sum_{j=1}^{\kappa_n} d_\xi q_j \otimes d_\xi q_j=c \lambda\,  
g_{S^{n-1}_{x_*}},\qquad c\neq 0.
$$
By Claim \ref{full:rank}, the subspace spanned by $d_\xi q_1(\xi),\ldots ,
d_\xi q_{\kappa_n}(\xi)$ has the full dimension $n-1$ in $T_\xi^*(S^{n-1}_{x_*})$ at any point $\xi \in S^{n-1}_{x_*}$.  
This shows that
$\{d_\xi q_j(\xi):\;1\leq j\leq \kappa_n\}$ span the full $T_\xi^*(S_{x_*}^{n-1})$, which proves the required
non-degeneracy condition.

Next, since $h_j$ are $C^2$ in $(x,\xi)$ by condition (b), the rank of   $d_\xi d_u p_u(x,\xi)$ changes continuously in $x$ and 
so is equal to $n-1$ for $x\in B(x_*,\delta)$ on $M$ for some $\delta>0$.   
\end{proof}


\end{document}